\newcommand{\newsection}[1]
{\section{#1}\setcounter{theorem}{0} \setcounter{equation}{0}
\par\noindent}
\newtheorem{theorem}{Theorem}
\newtheorem{lemma}[theorem]{Lemma}
\newcommand{\cd}{\, \cdot\, }
\newcommand{\supp}{\text{supp }}
\newcommand{\R}{{\mathbb R}}
\newcommand{\Z}{{\mathbb Z}}
\newcommand{\ang}{{\not\negmedspace\nabla}}
\newcommand{\la}{\langle}
\newcommand{\ra}{\rangle}
\newcommand{\e}{\epsilon}
\newcommand{\M}{{\mathcal{M}}}
\newcommand{\tv}{{\tilde{v}}}
\newcommand{\weight}{{\Bigl(1-\Bigl(\frac{r_s}{r}\Bigr)^{d+1}\Bigr)}}
\renewcommand{\S}{{\mathbb{S}}}
\begin{document}

\title[Localized energy estimates on Myers-Perry space-times]
{
Localized energy estimates for wave equations on $(1+4)$-dimensional
Myers-Perry space-times
}

\thanks{
The second author was supported in part by NSF grant DMS-1054289}

\author{Parul Laul}
\author{Jason Metcalfe}
\author{Shreyas Tikare}
\author{Mihai Tohaneanu}

\address{Department of Mathematics and
  Computer Science, Bronx Community College, 2155 University Ave.,
  Bronx, NY  10453 }

\address{Department of Mathematics, University of North Carolina,
  Chapel Hill, NC 27599-3250}

\address{Department of Mathematics, University of North Carolina,
  Chapel Hill, NC 27599-3250}

\address{Johns Hopkins University, Department of Mathematics,
  Baltimore, MD  21218}

\begin{abstract} Localized energy estimates for the wave equation have
  been increasingly used to prove various other dispersive estimates.
  This article focuses on proving such localized energy estimates on
  $(1+4)$-dimensional Myers-Perry black hole backgrounds with small
  angular momenta.  The
  Myers-Perry space-times are generalizations of higher dimensional
  Kerr backgrounds where additional planes of rotation are availabile
  while still maintaining axial symmetry.  Once it is determined that
  all trapped geodesics have constant $r$, the method developed by
  Tataru and the fourth author, which perturbs off of the
  Schwarzschild case by using a pseudodifferential multiplier, can be adapted.
\end{abstract}

\maketitle


\newsection{Introduction}
This article focuses on proving localized energy estimates for wave
equations on the family of $(1+4)$-dimensional Myers-Perry spacetimes
with small angular momenta.  The latter are higher dimensional
generalizations of the Kerr family of solutions to the Einstein vacuum
equations.  They are axisymmetric, though with the additional
dimension, an extra nonzero angular momentum may be permitted, which
further complicates the behavior of the null geodesics.  On, e.g., the
Schwarzschild and Kerr space-times, such
localized energy estimates have been essential to proving other types
of dispersive estimates such as Strichartz estimates \cite{MMTT}, \cite{Tohaneanu} and
pointwise decay estimates \cite{MTT}, \cite{Tataru}, \cite{DR2, DR3},
\cite{Luk1, Luk2}.

We first describe the localized energy estimates on the Minkowski
space-time.  To begin, we define
\[\|u\|_{LE_M} = \sup_{j\in \Z} 2^{-\frac{j}{2}} \|u\|_{L^2_{t,x}(\R
  \times \{|x|\in [2^{j-1},2^j]\})}\]
and
\[\|u\|_{LE^1_M} = \|u'\|_{LE_M} + \||x|^{-3/2} u\|_{L^2_{t,x}},\]
where $u' = (\partial_t u,\nabla_x u)$ is the space-time gradient.  To
measure an inhomogeneous term, we shall use a dual norm
\[\|f\|_{LE^*_M} = \sum_{k\in \Z}
2^{\frac{k}{2}}\|f\|_{L^2_{t,x}(\R\times \{|x|\in [2^{k-1},2^k]\})}.\]
Then for $\Box=-\partial_t^2+\Delta$, the localized energy estimate
for the wave equation states that
\begin{equation}
  \label{LEMink}
  \|u'\|_{L^\infty_t L^2_x} + \|u\|_{LE^1_M} \lesssim
  \|u'(0,\cd)\|_{L^2} + \|\Box u\|_{LE^*_M+ L^1_tL^2_x}
\end{equation}
provided $n\ge 4$.  A similar estimate holds when $n=3$ but the second
term in the definition of the $LE^1_M$ norm must be replaced by, e.g.,
$\||x|^{-1} u\|_{LE_M}$.  Estimates of this type first appeared in \cite{M}.
From this original estimate, it is clear that the $l^\infty_j$ norm in
the definition of $\|\,\cdot\,\|_{LE_M^1}$
can be replaced by square summability, which is stronger, on the angular
derivative term and, when $n\ge 4$, on the lower order term.
Generalizations of the original estimate have appeared in, e.g.,
\cite{Strauss}, \cite{KPV}, \cite{SmSo}, \cite{KSS}, \cite{Met}, \cite{BPSTz}
\cite{HY}, 
\cite{St}, \cite{MS, MS2}, \cite{MT, MT2} and have ultimately led to \eqref{LEMink}.  These estimates have
been applied to problems in scattering theory, used to prove long time
existence for nonlinear
wave equations, and used to prove other dispersive estimates
by handling the behavior of the solution in a compact region.  This
class of estimate is known to be fairly robust and similar estimates
are known for small, possibly time dependent, long-range perturbations
of the Minkowski metric \cite{Alinhac}, \cite{MS, MS2}, \cite{MT, MT2} and for time independent,
nontrapping, asymptotically flat perturbations \cite{BH}, \cite{Burq}, \cite{SW}.

While proofs that rely on Huygens' principle in odd spatial
dimensions (see, e.g., \cite{KSS}), Plancherel's theorem (see, e.g., \cite{SmSo}, \cite{Met}), and
resolvant estimates (see, e.g., \cite{BH}, \cite{Burq}) exist, the most robust argument and the
one that is most applicable to the current setting relies, in essence,
on a positive commutator argument.  See, e.g., \cite{St}, \cite{MS}.  For a choice of function $f(r)$, one multiplies the equation
$\Box u$ by $f(r)\partial_r u + \frac{n-1}{2}\frac{f(r)}{r} u$ and
integrates by parts.  The original estimate of \cite{M}, then, corresponds
to the choice $f(r)\equiv 1$.

Related procedures have been carried out on the $(1+3)$-dimensional
Schwarzschild space-times, beginning with \cite{LS}.  The Schwarzschild
space-times are the class of spherically symmetric, static solutions
to Einstein's equations and represent the simplest black hole
solutions.  When the analysis is carried out here, the phenomenon of
trapping is encountered.  The Schwarzschild space-time contains null
geodesics that stay in a compact region for all times.  Specifically
trapping occurs on the event horizon $r=2M$ and on the photon sphere
$r=3M$.

Trapping is a known barrier to localized energy estimates.  See, e.g.,
\cite{Ralston} and \cite{Doi} (the latter regards the related local smoothing estimate
for the Schr\"odinger equation).  And, thus, estimates on
Schwarzschild are expected to contain losses when compared to
\eqref{LEMink}; recently this was shown rigurously in \cite{Sb}.  By modifying the choice of $f(r)$ and, in
particular, requiring vanishing on the trapped sets, localized energy
estimates (with losses) were obtained for the wave equation on
Schwarzschild space-times in \cite{BS1, BSerrata, BS3, BS5}, \cite{DR, DR2}, \cite{MMTT}.

While trapping occurs on the event horizon $r=2M$, it was discovered \cite{MMTT}
that, by quantifying the red-shift effect in the style of \cite{DR}, 
this loss in the localized energy estimates can be negated.  On the
other hand, the
loss at the photon sphere typically arises as a quadratic vanishing of
the coefficient on the angular derivatives and the time derivative,
though with some, e.g., microlocal analysis this can be improved to
merely a logarithmic loss.  See \cite{MMTT} and the preceding works
\cite{BS5, BS3}.  

The Kerr family of solutions to Einstein's equations correspond to
axially symmetric, rotating black holes.  For small angular momentums,
the Kerr metrics are small short-range perturbations of the
Schwarzschild metric.  The structure of the trapping, however, is much
more complicated.  While still of codimension two in phase space, the
location of the trapping may no longer be described merely in physical
space.  As such, it is provably impossible \cite{Alinhac2} to obtain such a
localized energy estimate using a first order differential multiplier.

Despite this, three approaches have been developed for proving such on Kerr backgrounds
with small angular momenta.  In \cite{AB}, \cite{DR5, DR6, DR7}, and \cite{TT} respectively, the
authors proved certain versions of localized energy estimates using somewhat different approaches: \cite{AB} uses the existence of a nontrivial Killing tensor
(due to \cite{Carter2}), \cite{DR5, DR6, DR7} rely on frequency decomposition, and \cite{TT} is based on using a pseudodifferential
multiplier.  The three approaches are intimately related \cite{Carter}.  The
current study, however, relies most heavily on the ideas from \cite{TT}.

Though we shall currently focus only on the small angular momenta
regime, we mention some recent related works \cite{DR7}, \cite{Ar1, Ar2, Ar3} that permit
large momenta.  We also mention here a few places \cite{MMTT},
\cite{Tohaneanu}, \cite{Tataru}, \cite{MTT}, \cite{LMSTW}, \cite{DR2,
  DR3}, \cite{Luk1, Luk2} where example
applications of these estimates on black hole backgrounds can be found.  In particular, in these works, the
localized energy estimates are used to prove other measures of
dispersion.  They are used to obtain information about solutions in a
compact region where the geometry is most difficult and are combined
with known Minkowski estimates near infinity where the metric may be
viewed as a small perturbation of the Minkowski metric.  Such
arguments are akin to those appearing much earlier in, e.g., \cite{JSS} and to those used in, e.g., \cite{SmSo}, \cite{Burq},
\cite{KSS}, \cite{Met, Met2}, \cite{MS, MS2}, \cite{MT2}  to prove exterior domain analogs of
well-known boundaryless estimates.

Higher dimensional black hole space-times have been derived, based partially in interest coming from string theory.  Our motivation for studying this problem
comes from the connections to the problem of black hole stability,
which is described e.g. in \cite{DR5}.  It is well-known that classes of
quasilinear wave equations have global small data solutions in
$(1+n)$-dimensions for $n\ge 4$, but in order to guarantee such for
$n=3$, one must take advantage of some nonlinear structure.  See,
e.g., \cite{Sogge}.  As such,
one might consider such stability problems in higher dimensions first
where the nonlinear structure may be less essential.  The reader can
compare, e.g., the proof of the stability of Minkowski space \cite{CK} with
that in higher dimensions \cite{CCL}.

While the extra dimensions permit black holes with different
topologies (see e.g. \cite{ER2, ER1}, \cite{GP}, \cite{GS}, \cite{GL},
\cite{Horo},
\cite{Kunz}, \cite{Reall}, and the references therein), we shall study only those which are most closely
related to the standard Kerr solutions.  In particular, a higher
dimensional analog of the Schwarzschild space times was derived in
\cite{T}, and the analogs of Kerr are from \cite{MP}.  In higher dimensions, as
there are more directions in which one may rotate and yet maintain
axial symmetry, a broader family of solutions, called Myers-Perry
space-times, may be examined.

Localized energy estimates were proved independently on the (hyperspherical)
$(1+n)$-dimensional Schwarzschild space-times of \cite{T} in \cite{LM} and
\cite{Schlue}.  Rather than examining generic dimension, we examine
specifically $(1+4)$-dimensions.  In particular, as in \cite{TT}, our proof
will rely heavily on the integrability of the geodesic flow.  This is known
for Myers-Perry black holes with two distinct angular momenta
\cite{Palmer}, \cite{VSP},
but, to our knowledge, not generically.  In $(1+4)$-dimensions,
however, this covers the entire family.  Our main theorem, Theorem \ref{Kerr},
states that for small angular momenta there is a localized energy
estimate for the wave equation on such $(1+4)$-dimensional Myers-Perry
solutions.

This paper is structured as follows.  
The next section contains a review of the localized
energy estimates on $(1+4)$-dimensional Schwarzschild space-times.  In
what follows, we shall perturb off of this result.  We modify the
existing estimate of \cite{LM} by incorporating the redshift effect
and by smoothing out the multiplier near the photon sphere, which will
simplify the microlocal analysis that comes later.
In the third section, we present
the Myers-Perry metric and its most relevant properties.  We, there,
analyze the trapped geodesics and, in particular, prove that they each
lie on surfaces of constant $r$.   In the fourth
section, we define our local energy spaces, and in the final section,
we prove the main estimate, Theorem \ref{Kerr}.

\newsection{Localized energy on $(1+4)$-dimensional Schwarzschild}\label{Schwarzschild}
Here we shall rely on the approach of \cite{MMTT} and \cite{LM}.  Estimates of a similar
form were also proved in \cite{Schlue}.  Akin to the relationship
between \cite{MMTT} and \cite{TT}, the estimate of \cite{LM} plays a key role
as we assume that $a,b\ll 1$ and argue perturbatively.

In the sequel, we shall be employing pseudodifferential multipliers.
So we first seek to modify the multiplier of \cite{LM}, which is only
$C^2$ at the photon sphere, to make it smooth in a neighborhood of the
photon sphere.  Moreover, as in \cite{MMTT}, we shall include the
arguments of \cite{DR} that take advantage of the red-shift
effect and allow us to prove an estimate that does not degenerate at
the event horizon.  This was previously done in \cite{Schlue}, and we
include it here for completeness.

We assume a basic familiarity with \cite{LM} and shall not reproduce
every calculation here.

The metric for the $(1+n)$-dimensional hyperspherical Schwarzschild
space-time, which was discovered in \cite{T}, is
\[ds^2 = -\weight dt^2 + \weight^{-1}dr^2 + r^2d\omega^2,\]
where $r=r_s$ represents the event horizon and $d\omega^2$ is the line
element on the $(n-1)$-dimensional unit sphere.  Here $d=n-3$, which
allows us to quickly compare to the $n=3$ case.  We define
$r_{ps}=\Bigl(\frac{d+3}{2}\Bigr)^{\frac{1}{d+1}}r_s$ as $r=r_{ps}$ is
the location of the photon sphere.  $K=\partial_t$ is the Killing
vector field that is timelike in the domain of outer communication.
It extends into the interior of the black hole, becoming null on the
event horizon and spacelike in the interior.

As in the more typical $(1+3)$-dimensional case, 
the singularity at $r=r_s$ is a coordinate singularity.  Setting 
\[r^* = \int_{r_{ps}}^r
\Bigl(1-\Bigl(\frac{r_s}{s}\Bigr)^{d+1}\Bigr)\, ds\]
and $v=t+r^*$, the metric becomes
\[ds^2 = -\weight dv^2 + 2\,dv\,dr + r^2 d\omega^2.\]
While these coordinates are nonsingular at the event horizon, surfaces
of constant $v$ are null and not convenient for posing Cauchy problems
for the wave equation.  To this end, we introduce, as in \cite{MMTT},
\[\tv = v-\mu(r), \quad \mu\in C^\infty\]
subject to
\begin{itemize}
\item $\mu(r)\ge r^*$ for $r>r_s$ and $\mu(r)=r^*$ for
  $r>(r_s+r_{ps})/2$,
\item  $\mu'(r)>0$ and $2-\weight \mu'(r) >0$.
\end{itemize}
The metric, in the $(\tv, r,\omega)$ coordinates, then becomes
\begin{multline}\label{Smetric}ds^2 = -\weight\, d\tv^2 + 2\Bigl(1-\weight \mu'(r)\Bigr)\,d\tv\,dr
\\+ \Bigl(2\mu'(r)-\weight (\mu'(r))^2\Bigr)\,dr^2 + r^2\,d\omega^2.
\end{multline}
The first condition on $\mu$ shows that these coordinates coincide
with the Schwarzschild coordinates away from the event horizon.  The
second condition guarantees that $\tilde{v}=$constant slices are space-like.

For a choice $0<r_e<r_s$, we seek to solve the wave equation 
\begin{equation} \label{Swave}\Box_S u = f,\end{equation}
where $\Box_S = \nabla^\alpha \partial_\alpha$ when the background
metric is given by \eqref{Smetric},
in 
\[\M_R = \{\tv\ge 0, r\ge r_e\}\]
with initial data on 
\[\Sigma^-_R = \M_R\cap \{\tv=0\},\]
which is space-like.  We use 
\[\Sigma^+_R = \M_R\cap\{r=r_e\}\]
to denote the lateral boundary.

We shall work with a nondegenerate energy, rather than the conserved
energy associated to the Killing vector field $\partial_t$, as was
used in \cite{LM}.  This will help us, as in \cite{MMTT}, to cleanly
utilize the red-shift effect.  We define the energy on an arbitrary
$\tv=\tv_0$ slice to be
\[E[u](\tv_0) = \int_{\M_R\cap \{\tv=\tv_0\}} \Bigl( (\partial_r u)^2
+ (\partial_{\tv} u)^2 + |\ang u|^2\Bigr)\,r^{d+2}\,dr\,d\omega.\]
The initial energy $E[u](\Sigma^-_R)$ and the outgoing energy $E[u](\Sigma^+_R)$ are
defined as
\[E[u](\Sigma^-_R)=E[u](0),\quad E[u](\Sigma_R^+)=\int_{\Sigma^+_R} \Bigl((\partial_r u)^2
+ (\partial_{\tv} u)^2 + |\ang u|^2\Bigr)\,r_e^{d+2}\,d\tv\,d\omega.\]

We modify the localized energy spaces to reflect the quadratic loss at
the photon sphere.  To do so, we set
\[\|u\|_{LE_S} = \Bigl\|\Bigl(\frac{r-r_{ps}}{r}\Bigr) u\Bigr\|_{LE_M},\]
and
\[\|u\|_{LE^1_S} = \|\partial_r u\|_{LE_M} + \|\partial_{\tv}
u\|_{LE_S} + \|\ang u\|_{LE_S} + \|r^{-3/2} u\|_{L^2(\M_R)}.\]
Note that the loss due to trapping only occurs on the $\partial_{\tv}$
and $\ang$ components.  We analogously set
\[\|f\|_{LE^*_S} = \Bigl\|\Bigl(\frac{r-r_{ps}}{r}\Bigr)^{-1} f\Bigr\|_{LE^*_M}.\]

Then, we have the following theorem, which is largely from \cite{LM}
and \cite{Schlue}.

\begin{theorem}\label{thmSchwarzschild}
For $u$ solving \eqref{Swave}, we have
\[E[u](\Sigma^+_R) + \sup_{\tv} E[u](\tv) + \|u\|^2_{LE^1_S} \lesssim
E[u](\Sigma^-_R) + \|f\|^2_{LE^*_S}.\]
\end{theorem}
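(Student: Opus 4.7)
The plan is to prove the estimate via a positive commutator (multiplier) argument, following the blueprint of \cite{LM} but modified in two ways: (i) smoothing the multiplier across the photon sphere so that it is $C^\infty$ rather than merely $C^2$, and (ii) adjoining a red-shift vector field near the event horizon in the spirit of \cite{DR} and \cite{MMTT} in order to obtain a non-degenerate energy and absorb the boundary terms at $r=r_e$. I would then use these to contract with the energy-momentum tensor $Q_{\alpha\beta}[u]$ of the wave equation, integrate over $\M_R$, and apply the divergence theorem.

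First, I would choose a radial vector field $X=f(r)\partial_r$ and a scalar function $q(r)$ and consider the multiplier $Xu + qu$. The deformation tensor $\pi^X$ and the lower-order correction $q$ produce, after integration by parts, a bulk term of the form
\[\int_{\M_R} \Bigl(A(r)(\partial_r u)^2 + B(r)(\partial_{\tv}u)^2 + C(r)|\ang u|^2 + D(r) u^2\Bigr) + \text{boundary},\]
and I would follow the recipe of \cite{LM} to pick $f$ so that $B$ and $C$ vanish quadratically at $r=r_{ps}$ (forced by trapping) while $A$ and $D$ remain non-negative. The new wrinkle is to choose $f$ smooth through $r=r_{ps}$ rather than patching piecewise as in \cite{LM}; concretely I would replace the piecewise construction by one arising from a $C^\infty$ antiderivative with the same qualitative sign and vanishing behavior. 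After an additional $\square$-multiplication by a function of $r$ (the Morawetz-type correction) to control the zeroth-order piece, this yields control of $\|u\|_{LE^1_S}^2$ modulo boundary contributions and the $f\cdot(Xu+qu)$ term bounded via Cauchy-Schwarz by $\|f\|_{LE^*_S}^2$ and a small multiple of $\|u\|_{LE^1_S}^2$.

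Second, near $r=r_e<r_s$ the Killing vector $\partial_t=\partial_{\tv}$ becomes spacelike and the natural energy $E[u](\tv)$ is not coercive if one uses only $\partial_t$. To fix this, I would add a red-shift vector field $Y$ of the form $Y=\alpha(r)\partial_{\tv}+\beta(r)\partial_r$ supported in a neighborhood of the event horizon, with $Y$ transverse to the horizon and with $\pi^Y$ chosen so that $\pi^Y_{\alpha\beta}Q^{\alpha\beta}[u]$ is a positive definite quadratic form in $\partial_{\tv}u,\partial_r u,\ang u$ on $\{r_e\le r\le r_s+\delta\}$ (this is the red-shift calculation from \cite{DR}, available because surface gravity is positive). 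Pairing $Y$ with $\partial_{\tv}$ on a large-constant combination $NK+Y$ (with $K=\partial_{\tv}$) gives both a non-negative bulk term and positive boundary terms on $\Sigma^-_R,\ \Sigma^+_R$, and on the $\tv=\tv_0$ slices, producing exactly $E[u](\Sigma^+_R)$ and $\sup_{\tv}E[u](\tv)$ on the left and $E[u](\Sigma^-_R)$ on the right.

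The main obstacle will be arranging the three multiplier contributions (the Morawetz multiplier $X+q$, the red-shift multiplier $Y$, and the energy multiplier $N\partial_{\tv}$) so that all bulk terms are jointly non-negative; in particular, the region $r_s<r<r_{ps}$ is delicate because the Morawetz bulk degenerates there while the red-shift bulk is supported nearby, so one must check that cross terms do not spoil positivity and that the large constant $N$ in front of $\partial_{\tv}$ can absorb any indefinite contributions from $X$. Once this bookkeeping is done—exactly as in \cite{MMTT} and \cite{Schlue} but in the $(1+4)$-dimensional setting and with the smoothed multiplier of step one—an application of the divergence theorem on $\M_R$, followed by absorption of the inhomogeneous term by Cauchy-Schwarz in the $LE_S/LE_S^*$ duality, completes the proof.
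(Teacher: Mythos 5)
Your plan captures the paper's core structure: a smoothed Morawetz multiplier $X+q$ to produce a good sign in the bulk, a red-shift vector field supported near the horizon (the paper's $X_2 = -b(r)\bigl(\partial_r - \weight^{-1}\partial_t\bigr)$), and a large multiple of the Killing field $K=\partial_t$ to produce the non-degenerate energies $E[u](\tv)$, $E[u](\Sigma_R^\pm)$ and absorb cross terms; one then integrates the divergence identity \eqref{divP} and applies Cauchy--Schwarz. This is exactly the skeleton of Lemma \ref{mmttLemma} and the subsequent argument.

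There is, however, a genuine gap in your concluding step. A bounded multiplier $f(r)\partial_r$ (together with $q$) yields, after Cauchy--Schwarz, the localized energy estimate only with the weight $r^{-(d+3)/2}$ on $\partial_r u$ and $\partial_{\tv}u$ at infinity. But $\|u\|_{LE^1_S}$ as defined carries the sharp weight $r^{-1/2-}$ on $\partial_r u$ via the $\|\partial_r u\|_{LE_M}$ term, and your claim that the multiplier argument ``yields control of $\|u\|_{LE^1_S}^2$'' does not follow. The paper closes this by proving the weaker-weighted estimate first, then choosing a cutoff $\beta$ with $\beta\equiv 0$ for $r<R$ and $\beta\equiv 1$ for $r>2R$, applying the small-perturbation-of-Minkowski result of \cite{MS} (as made precise in \cite{LMSTW}) to $\beta u$, and absorbing the compactly supported commutator $[\Box_g,\beta]u$ using the weaker estimate already in hand. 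Two further details your sketch elides, but which are essential to make the bulk and boundary signs work: the paper contracts with a full triple $(X,q,m)$ including a compactly supported 1-form $m$ chosen with $\la m,dr\ra(r_s)>0$ (needed both to complete a square in the red-shift bulk and to get the coercive sign on $\Sigma_R^+$), and the smoothed Morawetz function $F$ is still unbounded as $r\to r_s^+$, so one must additionally compose with a bounded modification $\rho_\varepsilon$ and re-verify positivity of $l(f)$.
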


For any Lorentzian metric $g$, the energy-momentum tensor for the wave
equation is given by
\[Q_{\alpha\beta}[u]=\partial_\alpha u \partial_\beta u -
\frac{1}{2}g_{\alpha\beta}\partial^\gamma u\partial_\gamma u,\]
satisfying $\nabla^\alpha Q_{\alpha\beta}[u] = \partial_\beta u
\Box_g u$.  For a $C^1$ vector field $X$, a
scalar field $q$, and a 1-form $m$, we set
\[P_\alpha[u,X,q,m] = Q_{\alpha\beta}[u]X^\beta + q u\partial_\alpha u
-\frac{1}{2}(\partial_\alpha q) u^2 + \frac{1}{2}m_\alpha u^2.\]
It, thus, follows that
\begin{equation}\label{divP}\nabla^\alpha P_{\alpha}[u,X,q,m] = \Box_g u \Bigl(Xu+qu\Bigr) +
Q[u,X,q,m],
\end{equation}
where
\[Q[u,X,q,m] = Q_{\alpha\beta}[u]\pi^{\alpha\beta} + q\partial^\alpha
u \partial_\alpha u + m_\alpha u \partial^\alpha u  +
\frac{1}{2}(\nabla^\alpha m_\alpha - \nabla^\alpha \partial_\alpha q)u^2.\]
Here
\[\pi_{\alpha\beta}=\frac{1}{2}(\nabla_\alpha X_\beta +\nabla_\beta
X_\alpha)\]
denotes the deformation tensor of $X$.

The theorem will follow from a proper choice of $X$, $q$, and $m$.
Indeed, we will construct such satisfying the following lemma.

\begin{lemma}\label{mmttLemma}
There exist $X$, $q$, and $m$ that are smooth, spherically symmetric,
and $K$-invariant in $r\ge r_s$.  Moreover, $X$ is bounded in
$(\tv,r,\omega)$ coordinates and $X(dr)(r_s)<0$; $|q(r)|\lesssim
r^{-1}$ and $|q'(r)|\lesssim r^{-2}$; $m$ has compact support in $r$
and $\la m,dr\ra(r_s)>0$; and 
\[Q[u,X,q,m]\gtrsim r^{-(d+3)} (\partial_r u)^2 +
\Bigl(\frac{r-r_{ps}}{r}\Bigr)^2(r^{-(d+3)} (\partial_\tv u)^2 + r^{-1}
|\ang u|^2) + r^{-3} u^2.\]
\end{lemma}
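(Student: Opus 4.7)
The plan is to build $X$, $q$, and $m$ in two pieces: an exterior piece that modifies the \cite{LM} multiplier to be smooth near the photon sphere, and a red-shift piece near the horizon along the lines of \cite{DR, MMTT, Schlue}.

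In the exterior region $r\ge r_{ps}$, I would take $X = f(r)\,\partial_{r^*}$ in terms of the tortoise radial field, with $f$ a smooth, bounded, monotone function of $r$ that vanishes to first order at $r=r_{ps}$ and tends to suitable constants at the ends. The function $f$ agrees with the one from \cite{LM} outside a small neighborhood of $r_{ps}$ and is replaced by a $C^\infty$ profile inside. A direct computation of the deformation tensor $\pi^{\alpha\beta}$ gives
\[Q_{\alpha\beta}[u]\pi^{\alpha\beta} = a_1(r)(\partial_{r^*} u)^2 + a_2(r)(\partial_{\tv}u)^2 + a_3(r)|\ang u|^2,\]
where $a_1 \gtrsim r^{-(d+3)}$ away from the horizon (using $f'>0$), and $a_2$, $a_3$ each carry a common factor that vanishes linearly in $r-r_{ps}$. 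The bound in the lemma with weight $\bigl((r-r_{ps})/r\bigr)^2$ follows by the same algebraic cancellation as in \cite{LM}; the key point is that the smoothing of $f$ is carried out in such a way that the first-order vanishing at $r_{ps}$ is preserved, so the algebraic factor in $a_2,a_3$ is undisturbed.

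For $q$, I would take $q(r) = c/r$ plus a smooth compactly supported correction, exactly mimicking \cite{LM}, so that $-\frac{1}{2}\Box q \gtrsim r^{-3}$ (producing the $r^{-3}u^2$ term) and the pointwise bounds $|q|\lesssim r^{-1}$, $|q'|\lesssim r^{-2}$ are immediate. The sign-indefinite term $q\,\partial^\alpha u\partial_\alpha u$ coming from $q$ is absorbed into the principal contribution as in \cite{LM}. In the slab $r_e\le r\le r_s+\delta$, I extend $X$ and $q$ smoothly and introduce a compactly supported 1-form $m$ with $\la m,dr\ra(r_s)>0$; in the $(\tv,r,\omega)$ coordinates I arrange $X$ to be timelike with $X(dr)(r_s)<0$. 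The Dafermos--Rodnianski red-shift mechanism, as implemented in \cite{MMTT, Schlue}, then yields the pointwise bound $(\partial_r u)^2+(\partial_{\tv}u)^2+|\ang u|^2 + u^2$ on $Q[u,X,q,m]$ in a neighborhood of $r=r_s$, with no vanishing factor.

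The hard part will be gluing the two regional constructions in the transition zone $r_s<r<r_{ps}$ while keeping $Q$ pointwise nonnegative. There $f<0$, so the sign of $a_1$ is still correct (since $f'>0$), but one must verify that the cross term $m_\alpha u\partial^\alpha u$ and the indefinite contribution from $q\partial^\alpha u\partial_\alpha u$ can be absorbed by the positive principal and $u^2$ parts via Cauchy--Schwarz. As in \cite{MMTT, Schlue}, this is arranged by taking the support of $m$ sufficiently small and the constant in $q$ sufficiently large. The $(1+4)$-dimensional calculations differ from the $(1+3)$-dimensional ones only in the replacement of $r^2$ volume factors by $r^{d+2}=r^4$ and corresponding adjustments to the decay exponents in the lower-order terms, so no new phenomena arise.
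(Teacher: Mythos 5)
Your overall architecture (modify \cite{LM} near the photon sphere, add a Dafermos--Rodnianski red-shift vector field near the horizon, glue) matches the paper. But there is a genuine gap in the step you call "the same algebraic cancellation as in \cite{LM}." The \cite{LM} multiplier $f_1$ is only $C^2$ at $r=r_{ps}$, and the coefficient of $u^2$ in $Q$ --- the quantity $l(F)$ in the paper --- involves \emph{third} derivatives of the multiplier through $-\frac14 r^{-(d+2)}\partial_r\bigl[\weight r^{d+2}\partial_r\{\weight r^{-(d+2)}\partial_r(F r^{d+2})\}\bigr]$. Preserving "first-order vanishing at $r_{ps}$" is irrelevant to this term, and an arbitrary smooth replacement of $f_1$ that vanishes to first order at $r_{ps}$ will generically fail $l(F)>0$. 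This is precisely where the paper does real work: it mollifies the auxiliary function $a$ to $\psi_N * a$, subtracts a quadratic polynomial $Q_2$ to restore the second-order Taylor data at $r_{ps}$, and --- crucially --- isolates the third-derivative contribution $-\,(\text{pos.})\cdot(\psi_N * a''')(h(r))$ and uses the designed non-positivity of $a'''$ to keep $l(F)>0$. The remaining discrepancy $l(F)-l(f_1)$, after this cancellation, only involves derivatives of the mollified $a$ up to order $2$ and is therefore made small by taking $N$ large. Your proposal never engages with this third-derivative obstruction, so the centerpiece of the lemma is unproved.

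Two secondary gaps. First, the \cite{LM} multiplier contains a logarithm $h(r)$ that blows up as $r\to r_s$, so boundedness of $X$ near the horizon does not come for free; the paper truncates via $\rho_\varepsilon$ and then must check positivity of $n(r)$ separately in the region $\varepsilon r^{d+2}f(r)<-1$ by a careful balance of $\delta$, $\varepsilon$, and the growth of $\gamma'$. Your proposal simply "extends $X$ and $q$ smoothly" into the slab and defers everything to "Cauchy--Schwarz," which does not confront this competition. Second, taking $q(r)=c/r$ is not the choice that makes the scheme close: $q$ must be tied to $f$ through $q_1=\frac12\weight r^{-(d+2)}\partial_r(r^{d+2}f)$ so that the pure-gradient terms assemble into the stated coefficients and so that the $u^2$ coefficient is exactly $l(f)$; a generic $c/r$ plus compactly supported correction will not reproduce these cancellations.
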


\begin{proof}
 We begin by working in the $(t,r,\omega)$ coordinates and set
\[X=X_1+\delta X_2,\quad \delta\ll 1\]
with
\[X_1 = f(r)\weight \partial_r,\quad X_2 = - b(r)\Bigl(\partial_r
- \weight^{-1}\partial_t\Bigr).\]
For smooth choices of $f$ and $b$, these are smooth on
$\M_R$ in the $(v,r,\omega)$ coordinates.  Indeed,
\[X_1 = f(r) \Bigl(\weight \partial_r + \partial_v\Bigr),\quad X_2 = -b(r)\partial_r.\]

The choice of $X_1$ is inspired by the preceding work \cite{LM},
though to ease calculations in the sequel, we shall mollify the
original multiplier near the photon sphere.  The vector field $X_2$ is
inspired by \cite{DR}.  See \cite{Schlue} for the corresponding
argument in generic dimension.  This component allows us to take
advantage of the red shift effect.

To begin, for $g(r)=\frac{r^{d+2}-r_{ps}^{d+2}}{r^{d+2}}$ and
$h(r)=\ln\Bigl(\frac{r^{d+1}-r_s^{d+1}}{\frac{d+1}{2}r_s^{d+1}}\Bigr)$,
we set
\[f_1(r)=g(r)+\frac{d+2}{d+3}\frac{r_{ps}r_s^{d+1}}{r^{d+2}}a(h(r)),\]
where
\[a(x)=
\begin{cases}
x,& x\le 0,\\
x-\frac{2}{3\alpha^2}x^3 + \frac{1}{5\alpha^4}x^5,&0\le x\le \alpha,\\
\frac{8\alpha}{15},&x\ge \alpha.
\end{cases}\]
Here $\alpha =
5-$.  The function $a$ serves to
smooth out the logarithm near infinity to insure that $f_1$ is
everywhere increasing.  For later purposes, we note that $a'''$ is
everywhere non-positive.
With the further choice of
\[q_1[f_1] = \frac{1}{2}\Bigl[\weight r^{-(d+2)}\partial_r (r^{d+2}
f_1(r))\Bigr],\]
it was shown in \cite{LM} that 
\begin{multline*}Q[u, X_1[f_1], q_1,0] \gtrsim \weight
r^{-(d+3)}(\partial_r u)^2 +
\frac{1}{r}\Bigl(\frac{r-r_{ps}}{r}\Bigr)^2 |\ang u|^2 \\ + r^{-3} \weight u^2
 \end{multline*}
where $X_1[f_1] = f_1 \weight \partial_r$.

We would like to replace $f_1$ by a similar multiplier $F$ that is smooth at the
photon sphere. In order for $Q[u, X_1[f_1], q_1,0]$ to still satisfy the inequality above, it is sufficient to pick $F$ to be increasing, bounded, and $l(F)>0$\footnote{$l(f_1)$ is the coefficient of $u^2$ in $Q[u, X_1[f_1], q_1,0]$} where 
\[l(F) = -\frac{1}{4}r^{-(d+2)}\partial_r\Bigl[\weight
r^{d+2} \partial_r \Bigl\{ \weight r^{-(d+2)}\partial_r
(F(r)r^{d+2})\Bigr\}\Bigr].\]

 Let
\begin{align*}F(r) &= f_1(r)+\frac{d+2}{d+3} \frac{r_{ps}r_s^{d+1}}{r^{d+2}} \chi(r)
\Bigl( (\psi_N * a)(h(r))-Q_2(r)-a(h(r))\Bigr)\\
&=g(r) + \frac{d+2}{d+3}\frac{r_{ps}r_s^{d+1}}{r^{d+2}}\Bigl[
(1-\chi)(r)a(h(r)) + \chi(r)\Bigl((\psi_N*a)(h(r)) - Q_2(r)\Bigr)\Bigr].
\end{align*}
Here $\psi_N$ is a standard mollifier, which is an approximation of
the identity for large $N$.  See, e.g., \cite{Evans}.  $Q_2$ is a second order polynomial
(with very small coefficients) that is selected so that all
derivatives up to order $2$ of $(\psi_n * a)(h(r))-Q_2(r)$ coincide
with those of $a(h(r))$ at $r=r_{ps}$.  The function $\chi(r)$ is a
smooth cutoff that is supported in a small neighborhood of $r_{ps}$
and is the identity on a smaller neighborhood of $r_{ps}$.
Since $a\in C^2$ on the support of $\chi$, given any $\varepsilon>0$,
we can choose $N$ sufficiently large so that
\[|\partial^\alpha(F(r)-f_1(r))|< \varepsilon, \quad 0\le \alpha\le 2.\]

We proceed to show that $F$ preserves the desired properties of $f_1$ on
the support of $\chi$ if $N$ is chosen sufficiently large.  By
construction, we have that $F(r_{ps})=f_1(r_{ps})=0$.  By choosing $N$
sufficiently large so that
\[|\partial(F(r)-f_1(r))| < \frac{1}{2}\inf_{r\in \supp\chi} f_1'(r),\]
it follows easily that $F'(r)>0$.

It finally remains to show that $l(F)>0$. We write
\begin{multline}\label{lF}l(F)= l(f_1) + \Bigl[l(F-f_1) +
\frac{1}{4}\frac{d+2}{d+3}\frac{r_{ps}r_s^{d+1}}{r^{d+2}} \chi(r)
\weight^2 (\psi_N * a)'''(h(r)) (h'(r))^3 \Bigr] \\-
\frac{1}{4}\frac{d+2}{d+3}\frac{r_{ps}r_s^{d+1}}{r^{d+2}} \chi(r)
\weight^2 (\psi_N * a''')(h(r)) (h'(r))^3.
\end{multline}
As
\[\Bigl[l(F-f_1) +
\frac{1}{4}\frac{d+2}{d+3}\frac{r_{ps}r_s^{d+1}}{r^{d+2}} \chi(r)
\weight^2 (\psi_N * a)'''(h(r)) (h'(r))^3 \Bigr]\]
 only contains derivatives up to
order $2$ of the mollified $a$, it can be made uniformly small, and an
argument similar to that employed to show that $F'(r)>0$ can be used
to show $l(F)>0$ provided that 
\[-
\frac{1}{4}\frac{d+2}{d+3}\frac{r_{ps}r_s^{d+1}}{r^{d+2}} \chi(r)
\weight^2 (\psi_N * a''')(h(r)) (h'(r))^3>0,\]
which follows easily from the non-positivity of $a'''$ on the relevant region.

This gives that
\begin{multline*}Q[u, X_1[F], q_1[F],0] \gtrsim \weight
r^{-(d+3)}(\partial_r u)^2 +
\frac{1}{r}\Bigl(\frac{r-r_{ps}}{r}\Bigr)^2 |\ang u|^2 \\ + r^{-3} \weight u^2
 \end{multline*}
for a smooth choice of $F$.

This choice of $F$, however, does not give a bounded multiplier at the
event horizon.  To rectify this, we set $\rho$ to be a smooth, increasing function
satisfying
\[\rho(R)=
\begin{cases}
  R,\quad R\ge -1,\\
  -2,\quad R\le -3,
\end{cases}\]
and let $\rho_\varepsilon(R) = \varepsilon^{-1} \rho(\varepsilon
R)$.  Finally, we set
\[f(r) = \frac{1}{r^{d+2}} \rho_\varepsilon(r^{d+2}F(r))\]
in the definition of $X_1$.  Recalculating, with the abbreviated
notation $X_1[f]=X_1$ and $q_1[f]=q_1$, we have
\[  Q[u, X_1, q_1, 0] \ge C\weight^2 r^{-(d+3)} (\partial_r u)^2 +
  C\frac{1}{r} \Bigl(\frac{r-r_{ps}}{r}\Bigr)^2 |\ang u|^2 + l(f).
\]
As before, $l(f)\gtrsim r^{-3} \weight$ except when $\varepsilon r^{d+2} F(r) <-
1$.  We record that
\[l(f) = \rho'(\varepsilon r^{d+2}F(r)) l(F) - O(\varepsilon)
\rho''(\varepsilon r^{d+2}F(r)) - O(\varepsilon^2 \weight^{-1})
\rho'''(\varepsilon r^{d+2}F(r)).\]

For the multiplier $X_2$, we calculate
\begin{multline*}Q\Bigl[u,X_2,-\frac{d+2}{2}\frac{b(r)}{r}
,0\Bigr]=-\frac{1}{2}b'(r)\weight \Bigl(\partial_r u - \weight^{-1} \partial_t
u\Bigr)^2 
\\+\frac{(d+1)r_s^{d+1}}{2} \frac{b(r)}{r^{d+2}}\Bigl(\partial_r u - \weight^{-1} \partial_t
u\Bigr)^2 
\\+\Bigl[\frac{1}{2}b'(r)-\frac{b(r)}{r}\Bigr]\ang u|^2
+ \frac{d+2}{4}\nabla^\alpha \partial_\alpha\Bigl(\frac{b(r)}{r}
\Bigr) u^2. 
\end{multline*}
Inspired by this, we choose $b(r)$ to be smooth, positive and decreasing on $[r_s, (r_s+3r_{ps})/4)$, and
supported in $[r_s, (r_s+3r_{ps})/4]$.  For later purposes, we set
\[m_t = \frac{(d+1)r_s^{d+1}}{r^{d+2}}b(r)\gamma,\quad m_r = 
\weight^{-1}\frac{(d+1)r_s^{d+1}}{r^{d+2}}b(r)\gamma,\quad m_\omega = 0.\]
Then,
\begin{multline*}
  Q\Bigl[u,X_2, -\frac{d+2}{2}\frac{b(r)}{r}, m\Bigr] \ge 
\frac{(d+1)r_s^{d+1}}{2} \frac{b(r)}{r^{d+2}}\Bigl(\partial_r u - \weight^{-1} \partial_t
u + \gamma u\Bigr)^2 
\\+\Bigl[\frac{1}{2}b'(r)-\frac{b(r)}{r}\Bigr]\ang u|^2 -
\frac{(d+1)r_s^{d+1}}{2}\frac{b(r)}{r^{d+2}} \gamma^2 u^2
\\+ \frac{d+2}{4}\nabla^\alpha \partial_\alpha\Bigl(\frac{b(r)}{r}
\Bigr) u^2  + (d+1)\frac{r_s^{d+1}}{r^{d+2}}\partial_r (b(r)\gamma)
u^2
\end{multline*}

Combining, we obtain
\begin{multline}\label{Q1}
  Q\bigl[u, X_1+\delta X_2,
  q_1-\delta\frac{d+2}{2}\frac{b(r)}{r},\delta m]
  \ge C \weight^2 r^{-(d+3)}(\partial_r u)^2 \\+ C \frac{1}{r}
  \Bigl(\frac{r-r_{ps}}{r}\Bigr)^2 |\ang u|^2
+ C \delta b(r) \Bigl(\partial_r u - \weight^{-1} \partial_t
u + \gamma u\Bigr)^2 
+ n(r) u^2
\end{multline}
provided $\delta>0$ is sufficiently small.  Here
\begin{multline*}
  n(r) = -\delta\frac{(d+1)r_s^{d+1}}{2}\frac{b(r)}{r^{d+2}} \gamma^2
+ \delta \frac{d+2}{4}\nabla^\alpha \partial_\alpha\Bigl(\frac{b(r)}{r}
\Bigr)  +\delta (d+1)\frac{r_s^{d+1}}{r^{d+2}}\partial_r (b(r)\gamma) + l(f).
\end{multline*}
Before proceeding to bound $n$ from below, we address the fact that 
the right side of \eqref{Q1}
fails to control $\partial_t$ away from the event
horizon.  This is easily remedied by setting
\[q_2(r) = \chi_{r>(r_s+r_{ps})/2} r^{-(d+3)} \Bigl(\frac{r-r_{ps}}{r}\Bigr)^2,\]
and recomputing
\[Q\Bigl[u, X_1+\delta X_2, q_1-\delta \frac{d+2}{2}\frac{b(r)}{r} +
\delta_1 q_2,\delta m\Bigr].\]
Provided that $\delta_1\ll \delta$ and bootstrapping the many
negligible terms, we obtain
\begin{multline*}
  Q\Bigl[u, X_1+\delta X_2, q_1-\frac{d+2}{2}\frac{b(r)}{r}+
  \delta_1 q_2,m\Bigr]
  \ge C\weight^2 r^{-(d+3)}(\partial_r u)^2 \\+ C\frac{1}{r}
  \Bigl(\frac{r-r_{ps}}{r}\Bigr)^2 |\ang u|^2
+ C\delta b(r) \Bigl(\partial_r u - \weight^{-1} \partial_t
u + \gamma u\Bigr)^2 \\+ \delta_1 q_2 \weight^{-1} (\partial_t u)^2
+ n(r) u^2.
\end{multline*}

As we may easily observe that
\[X(dr)(r_s) = -\delta b(r_s) <0,\quad \la m,dr\ra(r_s) =
\frac{d+1}{r_s}b(r_s)\gamma(r_s) >0,\]
provided that $\gamma(r_s)>0$, it remains to choose $\gamma$ so that
$n$ is positive where $\varepsilon r^{d+2}f(r)<-1$.  We suppose that
$\supp\gamma\subset \{r<r_{ps}\}$, $0\le \gamma\le 1$, and $\gamma'>-1$.
For $r>r_{ps}$, the lower bound follows from that for $l(f)$ by the
support properties of $b(r)$ and $\gamma$.  For $r<r_{ps}$, we write 
\[n = l(f) + \delta \frac{(d+1)r_s^{d+1}}{r^{d+2}} b(r) \gamma'(r) -
O(\delta).\]
The lower bound on $\gamma'$, then, implies that $n\ge
l(f)-O(\delta)$, which is positive where $\varepsilon r^{d+2}f(r)\ge
-1$ for sufficiently small $\delta$.

Finally, in the region $\varepsilon r^{d+2}f(r)<-1$,
\begin{multline*}n \ge \delta \frac{(d+1)r_s^{d+1}}{r^{d+2}} b(r) \gamma'(r) -
O(\delta)- O(\varepsilon)
\rho''(\varepsilon r^{d+2}F(r)) \\- O(\varepsilon^2 \weight^{-1})
\rho'''(\varepsilon r^{d+2}F(r)).\end{multline*}
While $\gamma'$ may be taken to be positive, the requirement that
$0\le \gamma \le 1$ places restrictions on how large it may be.  On
average, $\gamma'$ can be at most $O(e^{c/\varepsilon})$ on this
interval.  But since the interval of integration $\varepsilon
r^{d+2}f(r)<1$ is of length $e^{-c/\varepsilon}$,
\[\int_{\varepsilon r^{d+2}f(r)<-1} \delta + \varepsilon
|\rho''(\varepsilon r^{d+2}f(r))| + \varepsilon^2 \weight^{-1}
|\rho'''(\varepsilon r^{d+2}f(r))|\,dr \lesssim \varepsilon +
e^{-c/\varepsilon} \ll \delta,\]
provided that $\varepsilon$ is sufficiently small.  And this completes
the proof of the lemma.
\end{proof}

\begin{proof}[Proof of Theorem \ref{thmSchwarzschild}]  
With the previous lemma in hand, we only tersely describe the
remainder of the proof as the details follow from obvious
modifications of the argument in \cite{MMTT}.
We allow $X$,
  $q$, and $m$ to be as in the lemma.
  From the divergence relation \eqref{divP} and the fact that $K$ is a
  Killing vector field, we have
\[\nabla^\alpha P_\alpha[u, X+CK, q, m] = (\Box_g u)((X+CK)u+qu) +
Q[u,X,q,m]\]
for a large constant $C$.  Integrating over $0<\tv<\tv_0$, and
$r>r_e$, we obtain
\begin{multline*}
  \int \Bigl(\Box_g u  ((X+CK)u+qu) + Q[u,X,q,m])\,dVol = \int \la
  d\tv, P[u, X+CK,q,m]\ra r^{d+2}\,dr\,d\omega\Bigl|_{\tv = 0}^{\tv =
    \tv_0}\\ - \int_{r=r_e} \la dr, P[u,X+CK,q,m]\ra\, r_e^{d+2}\,d\tv d\omega.
\end{multline*}

For $C$ large enough and $r_e$ near enough to $r_s$, we claim
\begin{equation}
  \label{timeBdy}
  E[u](\tv_1) \approx -\int_{\tv=\tv_1} \la
  d\tv,P[u,X+CK,q,m]\ra\,r^{d+2}\,dr\,d\omega,\quad \tv_1\ge 0
\end{equation}
and
\begin{equation}
  \label{lateralBdy}
  \la dr,P[u,X+CK,q,m]\ra \gtrsim (\partial_r u)^2 + (\partial_\tv
  u)^2 + |\ang u|^2 + u^2,\quad r=r_e.
\end{equation}
Upon proving these, a localized energy estimate, though with weight
$r^{-(d+3)/2}$ rather than $r^{-1/2-}$ at infinity on the $\partial_r$
and $\partial_{\tv}$ terms, follows immediately from the lemma and the
Schwarz inequality.

To prove \eqref{timeBdy}, we note that, for $r>r_s$,
\begin{align*}
  \la d\tv, P[u,X+CK]\ra &= (X(d\tv)+C)\la d\tv, P[u,\partial_{\tv}]\ra
  + X(dr)\la d\tv,P[u,\partial_r]\ra\\
&\approx C\Bigl[(\partial_{\tv} u)^2 + \weight (\partial_r u)^2 +
|\ang u|^2\Bigr] + (\partial_r u)^2.
\end{align*}
Indeed, due to the boundedness of $X$, $X(d\tv)+C\approx C$ for $C$
large enough.  The condition $X(dr)(r_s)<0$ is used to obtain the
nondegenerate $\partial_r$ contribution.  And by continuity, the same
expression extends to $r>r_e$ for $r_e$ sufficiently close to $r_s$.
To complete the proof of \eqref{timeBdy}, we note that the decay of
$q$, the compact support of $m$, and the Schwarz inequality reduce
controlling the lower order terms to proving a straightforward Hardy-type inequality
such as
\[\int_{r_e}^\infty r^{-2} u^2\, r^{d+2}\,dr \lesssim
C^{-1/2}\int_{r_e}^\infty \Bigl[C\weight +1\Bigr] (\partial_r u)^2 \,r^{d+2}\,dr.\]

For \eqref{lateralBdy}, we similarly compute
\[\la dr, P[u, X+CK, 0, m]\ra \gtrsim C (\partial_\tv u)^2 + |\ang
u|^2 - \Bigl(1-\frac{r_s^{d+1}}{r_e^{d+1}}\Bigr) (\partial_r u)^2 +
u^2,\]
when $0<r_s-r_e \ll 1$.  Here we have used the lateral boundary
condition on $m$ as provided by the lemma.  As
\[q u \la dr, du\ra - \frac{1}{2}u^2\la dr,dq\ra \ll
C(\partial_\tv u)^2 + u^2 + \Bigl(1-\frac{r_s^{d+1}}{r_e^{d+1}}\Bigr)
(\partial_r u)^2,\]
provided that $C$ is large enough and $r_s-r_e$ is sufficiently small,
these terms can be bootstrapped into the above. We can now translate these bounds to any $0<r_e<r_s$ from local thoery. This completes the
proof of the theorem with the weaker weights at infinity. 

In order to get the sharp weights at infinity, we can rely on the
methods of \cite{MS} for small perturbations of Minkowski space.  See
the sharper statement of the results that follow from that proof in
\cite[Lemma 3.3]{LMSTW}.  Indeed, for some constant $R\gg r_{ps}$ and
a smooth $\beta$ so that $\beta(r)\equiv 0$ for $r<R$ and
$\beta(r)\equiv 1$ for $r>2R$, then an application of the results of
\cite{MS} complete the proof provided that
\[\|[\Box_g, \beta] u\|_{LE^*_M},\]
can be bounded.  But as $[\Box_g,\beta]$ is supported in a compact
region, it follows immediately that the above is controlled using the
previously established estimate with weaker weights at infinity.
\end{proof}

\newsection{The Metric}\label{theMetric}
Here we introduce the class of
(1+4)-dimensional Myers-Perry black holes.  These space-times are
higher dimensional analogs of the well-known Kerr solutions of
Einstein's equations.  With the additional dimension, an extra angular
momentum parameter is introduced.  

The line element for the $(1+4)$-dimensional Myers-Perry solution is given by
\begin{multline}\label{MPmetric} ds^2 = -dt^2 +\frac{r_s^2}{\rho^2}\Bigl[dt+a\sin^2\theta d\phi + b
\cos^2\theta d\psi\Bigr]^2  +\frac{\rho^2}{4\Delta} dx^2 \\+ \rho^2
d\theta^2 + (x+a^2)\sin^2\theta d\phi^2 + (x+b^2)\cos^2\theta d\psi^2.
\end{multline}
Here $r_s$ is a parameter, depending on the mass of the black hole,
which corresponds to the Schwarzschild radius.  Moreover,
\begin{align*}\rho^2 &= x+a^2\cos^2\theta + b^2 \sin^2\theta,\\
\Delta &= (x+a^2)(x+b^2) - r_s^2 x.
\end{align*}
The parameters $a,b$ are angular momentum parameters.  We are using
the notational conventions that we learned from \cite{FS}.  In
particular, $x=r^2$, where $r$ is the commonly used ``radial'' variable.

The event horizons are given by the solutions to $\Delta=0$.  In
particular, they are $x=x_\pm$,
\[x_\pm = \frac{1}{2}\Bigl(r_s^2 - a^2 - b^2 \pm \sqrt{(r_s^2 -
  a^2-b^2)^2-4a^2b^2}\Bigr).\]
We shall primarily be concerned with the domain of outer communication $x>x_+$.

For future reference, we note that
\[\sqrt{-g} = \frac{1}{2}\sin\theta \cos \theta \rho^2\]
and that the 
inverse metric is given by
\[g^{tt} = \frac{1}{\rho^2}\Bigl[(a^2-b^2)\sin^2\theta -
\frac{(x+a^2)[\Delta + r_s^2(x+b^2)]}{\Delta}\Bigr],\]
\[g^{t\phi} = \frac{ar_s^2 (x+b^2)}{\rho^2 \Delta},\quad g^{t\psi} =
\frac{br_s^2(x+a^2)}{\rho^2\Delta}\]
\[g^{\phi\phi} = \frac{1}{\rho^2}\Bigl[\frac{1}{\sin^2\theta} -
\frac{(a^2-b^2)(x+b^2)+b^2r_s^2}{\Delta}\Bigr]\]
\[g^{\psi\psi} = \frac{1}{\rho^2} \Bigl[\frac{1}{\cos^2\theta}
+\frac{(a^2-b^2)(x+a^2)-a^2r_s^2}{\Delta}\Bigr]\]
\[g^{\phi\psi} = -\frac{ab r_s^2}{\rho^2\Delta},\quad
g^{xx}=4\frac{\Delta}{\rho^2},\quad g^{\theta\theta} = \frac{1}{\rho^2}.\]

As expected, the Myers-Perry black holes serve as higher dimensional
analogs to the axisymmetric Kerr black holes.  The structure of the
singularities, horizons, and ergoregions display similarities to those
of the more familiar Kerr space-times.  Further similarities include
superradiance, the construction of maximal analytic extensions, and 
the existence of hidden symmetries.  See \cite{MP}, \cite{Myers},
\cite{FK}, and the references therein.  Some properties do differ,
such as the existence of ultra-spinning black holes \cite{MP},
\cite{Myers}. 

 In what follows, we focus our comparisons on the
behavior of null geodesics.  In particular, we show that each trapped geodesics has a
constant $r$.  Like the Carter tensor \cite{Carter2} for Kerr, the Myers-Perry
space-times contain hidden symmetries \cite{FK} in all dimensions, but
as is mentioned therein this is insufficient to guarantee the
separability of the Hamilton-Jacobi equations in generic dimensions.
The Hamilton-Jacobi equations can be separated in the special case
that there are, at most, two distinct rotations parameters.  See \cite{Palmer}
and \cite{VSP}.  Specifically for the $(1+4)$-dimensional solutions, the
hidden symmetries and separability appear in \cite{FS}, and it is this
exposition that closely informs what immediately follows.

  Let $E=-\partial_t$, $\Phi = \partial_\phi$, $\Psi = \partial_\psi$,
and $K$ denote the constants of motion.  Then we have, again with
notations from \cite{FS},
\begin{align}
  \rho^4 \dot{\theta}^2 &= \Theta,\label{dotTheta}\\
\rho^4 \dot{x}^2 &= 4\mathcal{X},\label{dotx}\\
\rho^2 \dot{t} &= E\rho^2 + \frac{r_s^2 (x+a^2)(x+b^2)}{\Delta}\mathcal{E},\\
\rho^2 \dot{\phi} &= \frac{\Phi}{\sin^2\theta} - \frac{a r_s^2
  (x+b^2)}{\Delta}\mathcal{E} - \frac{(a^2-b^2)\Phi}{x+a^2},\\
\rho^2 \dot{\psi} &= \frac{\Psi}{\cos^2\theta} - \frac{br_s^2 (x+a^2)}{\Delta}\mathcal{E}+\frac{(a^2-b^2)\Psi}{x+b^2}.
\end{align}
Here we have abstracted
\begin{align}
\mathcal{E}&= E+\frac{a\Phi}{x+a^2} + \frac{b\Psi}{x+b^2} \\
\Theta &= E^2(a^2\cos^2\theta +
b^2\sin^2\theta)-\frac{1}{\sin^2\theta}\Phi^2 -
\frac{1}{\cos^2\theta}\Psi^2 + K\label{Theta}\\
\mathcal{X}&=\Delta \Bigl[E^2 x
+(a^2-b^2)\Bigl(\frac{\Phi^2}{x+a^2}-\frac{\Psi^2}{x+b^2}\Bigr)-K\Bigr]
+ r_s^2 (x+a^2)(x+b^2)\mathcal{E}^2.\label{X}
\end{align}


By writing $(a^2-b^2) =
(x+a^2)-(x+b^2)$, we see that
\begin{multline*}-r_s^2 x (a^2-b^2)\Bigl(\frac{\Phi^2}{x+a^2} -
\frac{\Psi^2}{x+b^2}\Bigr) + r_s^2 (x+a^2)(x+b^2)\Bigl(\frac{a^2
  \Phi^2}{(x+a^2)^2} + \frac{b^2\Psi^2}{(x+b^2)^2}\Bigr)
\\= r_s^2(b^2\Phi^2 + a^2 \Psi^2).\end{multline*}
Making this substitution in the definition of $\mathcal{X}$, we obtain
\begin{multline*}\mathcal{X} = \Delta (E^2 x -K) + (a^2-b^2)\Bigl(\Phi^2 (x+b^2) - \Psi^2
(x+a^2)\Bigr) \\+ r_s^2 \Bigl(E^2(x+a^2)(x+b^2) + 2aE\Phi(x+b^2)+2bE\Psi(x+a^2)+(b\Phi+a\Psi)^2\Bigr).
\end{multline*}
We have reduced our analysis to understanding this cubic polynomial as
the zeros of $\mathcal{X}$ correspond to turning points of \eqref{dotx}.

At least one of the parameters $E, K, \Phi, \Psi$ should be nonzero.
As \eqref{dotTheta} disallows $E=K=0$, the above representation for
$\mathcal{X}$ is always nondegenerate.  

We first examine the case that $E=0$.  Rewriting \eqref{Theta}
\[\Theta = \Bigl[K+(\Phi-Ea)^2 -\Phi^2 + (\Psi-Eb)^2-\Psi^2\Bigr] -
\frac{(\Phi - Ea\sin^2\theta)^2}{\sin^2\theta} -
\frac{(\Psi-Eb\cos^2\theta)^2}{\cos^2\theta},\]
it follows from \eqref{dotTheta} that
\[K+(\Phi-Ea)^2 -\Phi^2 + (\Psi-Eb)^2-\Psi^2 \ge 0.\]
Thus, in particular, when $E=0$, we must have $K>0$.
In this case,
$\mathcal{X}$ is a quadratic equation with leading coefficient $-K$,
and we note that $\mathcal{X}(x_+)\ge 0$ since $\Delta(x_+)=0$.  Thus, since
$K>0$, there is a single zero with multiplicity 1 outside of the
event horizon.  $\mathcal{X}$ must change sign at this point from
positive to negative, which
creates a single right turning point for \eqref{dotx} and no trapped
geodesic can exist. 


We next examine the case $E\neq 0$.  Here $\mathcal{X}$ is a cubic polynomial
with a positive leading coefficient.  As it is obvious from the
original formulation \eqref{X} of $\mathcal{X}$ that $\mathcal{X}(x_+)>0$, it
follows that there is always one root of $\mathcal{X}$ to the left of
$x_+$.  Three cases then must be examined: (1) $\mathcal{X}$ has no
roots greater than $x_+$, (2) it has two distinct roots in $x>x_+$, or
(3) it has a since double root in this region.  If there are no roots,
then $\mathcal{X}>0$ in this region and all null geodesics escape to
infinity.  If there are two distinct roots $x_1, x_2$ so that
$x_+<x_1<x_2$, then $\mathcal{X}$ changes sign from positive to
negative and negative to positive respectively at $x_1$ and $x_2$.
Thus $x_1$ is a right turning point and $x_2$ is a left turning point,
which does not support a trapped geodesic.  A double root $x_0$ of
$\mathcal{X}$ corresponds to a steady state and implies that all other
geodesics converge to $x_0$ in one direction and either $x=0$ or
$\infty$ in the other direction, which are thus not trapped.


The above analysis shows that along any trapped null geodesic we have
constant $x$, which corresponds to a double root of $\mathcal{X}$.

\newsection{Localized energy spaces}
%
%
We now return to the $(1+4)$-dimensional Myers-Perry space-time with
metric given by \eqref{MPmetric}, and we seek to define localized
energy norms that reflect the more complicated trapping on this
background and to state our main theorem.

We let $\tau, \Xi, \Phi, \Psi,$ and $\Theta$ denote the Fourier
variables for $t, x, \phi, \psi,$ and $\theta$.  We are abusing
notation as we early used some of these to denote constants of
motion.  From this point forward, however, the notation will be
reserved solely for the Fourier variables.

We set
\[p(x,\theta, \tau, \Xi, \Phi, \Psi, \Theta) = 
g^{tt}\tau^2 + 2g^{t\phi}\tau \Phi + 2g^{t\psi}\tau\Psi +
g^{\phi\phi}\Phi^2 + g^{\psi\psi}\Psi^2 + 2g^{\phi\psi}\Phi\Psi +
g^{xx}\Xi^2 + g^{\theta\theta}\Theta^2
\]
to be the Hamiltonian, which vanishes on any null geodesic.  The
Hamilton flow indicates 
\begin{equation}
  \label{dotr}
  \dot{x} = \frac{\partial p}{\partial \Xi} = \frac{8\Delta}{\rho^2}\Xi\\
\end{equation}
\begin{equation}
  \label{dotxi}
  \dot{\Xi} = - \frac{\partial p}{\partial x} = -\Bigl(g^{tt}_{,x}\tau^2 + 2g^{t\phi}_{,x}\tau \Phi + 2g^{t\psi}_{,x}\tau\Psi +
g^{\phi\phi}_{,x}\Phi^2 + g^{\psi\psi}_{,x}\Psi^2 + 2g^{\phi\psi}_{,x}\Phi\Psi +
g^{xx}_{,x}\Xi^2 + g^{\theta\theta}_{,x}\Theta^2\Bigr).
\end{equation}
Equivalent to the latter, we have
\begin{align*}
  \rho^2 \dot{\Xi} &=
  -\frac{\partial}{\partial x} (\rho^2 p) + \frac{\partial}{\partial
    x}(\rho^2) p\\
&=R_{a,b}(x,\tau,\Phi,\Psi) \Delta^{-2} + \frac{\partial}{\partial x}(\rho^2) p - 4(2x+a^2+b^2-r_s^2) \Xi^2.
\end{align*}
To be explicit, we have
 \begin{multline*}
   R_{a,b} = \Bigl(x^2(x+b^2)(x-2r_s^2+b^2) + a^4(x+b^2)^2 +
   a^2(b^2(4x^2-2r_s^2x+r_s^4) + 2x^2(x-r_s^2)+2b^4x)\Bigr)\tau^2
 \\+2ar_s^2(x+b^2(2x-r_s^2)+b^4)\tau\Phi +
 2br_s^2(x^2+a^2(2x-r_s^2)+a^4)\tau \Psi
 \\+(b^2(x^2+2(r_s^2+b^2)x+(r^4-b^4)) - a^2(x^2-2b^2(x-r^2)+b^4))\Phi^2
 \\+(a^6 - a^4 (b^2 + 2 r_s^2 - 2 x) - b^2 x^2 + 
  a^2 (r_s^4 - 2 r_s^2 x + x (-2 b^2 + x))\Psi^2
 \\-2abr_s^2(2x-r_s^2+a^2+b^2)\Phi\Psi.
 \end{multline*}
%
%
 
Along any null geodesic, we have that $p=0$.  Moreover, as we proved
in Section \ref{theMetric}, trapped null geodesics have constant $x$,
and thus, by \eqref{dotr}, $\Xi\equiv 0$.  Hence, corresponding to any
trapped null geodesic, there must be a root $r_{a,b}(\tau,\Phi,\Psi)$
solving
\[R_{a,b}(r_{a,b}^2,\tau,\Phi,\Psi) = 0.\]

Note that $\tau^{-2}R_{a,b}$ is a fourth order polynomial in $x$ and $\tau^{-2}R_{0,0} = x^3(x-2r_s^2)$ has a simple root at $x=r_{ps}^2$. Moreover, near $x=r_{ps}^2$, there is a constant $C_{mp}$ so that $|\Phi|, |\Psi| \leq C_{mp}|\tau|$ since $p=0$. Hence $\tau^{-2}R_{a,b}$ is a small perturbation of $\tau^{-2}R_{0,0}$ for small enough $a,b$, and has a simple root near $r=r_{ps}$.

  It will be more convenient to work with the $r$ coordinate, $x=r^2$, 
 and its corresponding Fourier variable $\xi=2r\Xi$ instead of $x$ and
 $\Xi$.  For future reference, we record that 
 \begin{equation}\label{rderiv} 
  \partial_r (\rho^2 p) = -2r R_{a,b} \Delta^{-2} + \partial_r (\frac{\Delta}{r^2}) \xi^2
 \end{equation}
 \begin{equation}\label{xideriv}
 \partial_{\xi} (\rho^2 p) = 2\frac{\Delta}{r^2} \xi.
  \end{equation}

  We are now ready to define the local energy spaces, in a similar
  fashion to \cite{TT}.
 Naively, one would like to replace the $r-r_{ps}$ factor in \cite{LM} by a quantization of $r-r_{a,b}(\tau,\Phi,\Psi)$. The problem with this approach is that any such quantization depends nontrivially on $\tau$, which makes uniform energy bounds on $\tilde{v}$-slices difficult to prove. Instead, near $r=r_{ps}$ we can write
\[
p = g^{tt} (\tau-\tau_1(r,\theta,\xi,\Psi,\Phi,\Theta)) (\tau-\tau_2(r,\theta,\xi,\Psi,\Phi,\Theta))
\]  
for some real smooth 1-homogeneous symbols $\tau_1$ and $\tau_2$. We define
\[
 c_i(r, \theta, \xi,\Psi,\Phi,\Theta) = \chi_{\geq 1}(r-r_{a,b}(\tau_i, \Phi, \Psi)) 
\]
where $\chi_{\geq 1}$ is a smooth symbol which equals $1$ for frequencies $\gg 1$ and $0$ for frequencies
 $\lesssim 1$. The role of the cutoff is to transform the homogeneous
 symbol into a classical one. Note that since low frequencies are
 already controlled without degeneracy near the trapped set, this
 makes no difference in our estimates. 


 We use the symbols $c_i$ to define microlocally weighted function spaces in a neighborhood $V\times\S^3$ of $\{r=r_{ps}\}\times \S^3$. We set
\[
\|u\|_{L_{c_i}^2}^2 = \|c_i^w(x,D) u\|_{L^2}^2 + \|u\|_{H^{-1}}^2.
\]  
  For the inhomogeneous term, we define the dual norm
\[
 \|g\|_{c_iL^2}^2 = \inf_{c_i(x,D) g_1 + g_2 = g} (\|g_1\|_{L^2}^2 +
 \|g_2\|_{H^1}^2).
\]  
For the remainder of the paper, we shall be using the $r$ variable
rather than the $x$ variable from Section \ref{theMetric}.  In another
abuse of notation, $x$ is now used to denote $x=r\omega$ analogous to
standard spherical coordinates.

 Pick $\chi(r)$ to be a cutoff function supported in $V$ and equal to $1$ near $r=r_{ps}$. We set our local energy norm to be
\begin{equation}
\begin{split}
\|u\|_{LE_{mp}^1} = & \|\chi(D_t - \tau_2(x,D))\chi u\|_{L^2_{c_1}} 
+ \|\chi (D_t - \tau_1(x,D))\chi u\|_{L^2_{c_2}} + \|(1-\chi^2) \partial_t  u\|_{LE_M}
\\ &\ +  \|(1-\chi^2) \ang  u\|_{LE_M}
+ \| \partial_r u\|_{LE_M} + \| r^{-3/2} u \|_{L^2}.
\end{split}
\label{leK}\end{equation}

 For the inhomogeneous term, we set the dual norm to be
\begin{equation}\label{leK*}
\|f\|_{LE_{mp}^*} = \| (1-\chi) f\|_{LE_M^*} + \|\chi f\|_{c_1 L^2 + c_2 L^2}
\end{equation}


Letting $\Box_{mp}$ denote the d'Alembertian in the Myers-Perry
metric, the main result in our paper is the following: 
 
\begin{theorem}\label{Kerr}  For $|a|, |b|$ sufficiently small and for
  some 
 $r_- < r_e < r_+$, let $u$ solve $\Box_{mp} u = f$ in $\M_R$. Then
\begin{equation}
 \|u \|_{LE_{mp}^1}^2 + \sup_{\tilde v} E[u](\tilde v) + E[u](\Sigma_R^+)
 \lesssim E[u](\Sigma_R^-)+ \|f \|_{LE_{mp}^*}^2.
\end{equation}
in the sense that the left hand side is finite and the inequality
holds whenever the right hand side is finite.
\end{theorem}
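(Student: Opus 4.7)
The plan is to perturb off the Schwarzschild estimate of Theorem~\ref{thmSchwarzschild}, adapting the pseudodifferential strategy of \cite{TT} developed for Kerr. Writing $\Box_{mp}=\Box_S+P_{a,b}$, the difference $P_{a,b}$ has coefficients of size $O(|a|+|b|)$ that are short range in $r$, so away from the photon sphere the Schwarzschild argument together with the small perturbation theory of \cite{MS}, \cite{MT} suffices. The essential new feature, already emphasized in Section~\ref{theMetric}, is that trapping occurs at the frequency-dependent locus $\{R_{a,b}(r^2,\tau,\Phi,\Psi)=0\}$ rather than at the physical hypersurface $r=r_{ps}$; by \cite{Alinhac2} no first-order differential multiplier can capture this, which is exactly why the norms \eqref{leK} and \eqref{leK*} are built from the microlocal weights $c_1,c_2$ coming from the factorization $p=g^{tt}(\tau-\tau_1)(\tau-\tau_2)$.

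First I would split $u=\chi u+(1-\chi) u$ using the cutoff $\chi$ from \eqref{leK}. On the support of $1-\chi$ the metric is exactly Schwarzschild near the horizon and a small short-range perturbation of Minkowski at infinity, so the contributions from this region to all terms in $LE^1_{mp}$ follow from Theorem~\ref{thmSchwarzschild} combined with the perturbative exterior estimates of \cite{MS}, \cite{MT}. Inside the support of $\chi$ I would construct a self-adjoint pseudodifferential multiplier $M$ with principal symbol $m_0$ built from the Schwarzschild radial multiplier $F(r)\xi$ twisted by a smooth function of $c_1 c_2$, chosen so that the Poisson bracket $\{m_0,\rho^2 p\}$ vanishes to second order on the trapped set. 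Using \eqref{rderiv}--\eqref{xideriv} and the fact that $\partial_r R_{a,b}\neq 0$ on the trapped set, inherited from the simple root of $\tau^{-2}R_{0,0}=x^3(x-2r_s^2)$ at $x=r_{ps}^2$ and preserved for small $|a|+|b|$, one can arrange
\[
\{m_0,\rho^2 p\}\gtrsim \xi^2+c_1^2(\tau-\tau_2)^2+c_2^2(\tau-\tau_1)^2+(c_1^2+c_2^2)(\Phi^2+\Psi^2+\Theta^2)
\]
modulo a lower-order coercive $u^2$ term. The redshift field $\delta X_2$, the scalar $q$, and the one-form $m$ from Lemma~\ref{mmttLemma} are retained without modification so as to preserve the nondegenerate control at $r=r_+$.

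The hardest step is passing from this symbol-level positivity to an operator inequality. I would replace the differential multiplier $X$ in the positive commutator argument by the self-adjoint quantization $M$ of $m_0$, integrate the resulting identity over $\M_R$, and invoke a sharp G{\aa}rding inequality to recover the $LE^1_{mp}$ norm on the left; the $L^2_{c_i}$ pieces arise precisely because the $c_i^2$ are how the Poisson bracket positivity is quantized. The error terms fall into three classes: subprincipal commutator contributions in $[M,\rho^2\Box_{mp}]-i\,\text{Op}(\{m_0,\rho^2 p\})$, of lower order and absorbed by the Hardy term $\|r^{-3/2}u\|_{L^2}$ together with a small multiple of the nondegenerate radial and angular contributions; cutoff commutators $[\Box_{mp},\chi]$, supported away from the trapped set and thus controlled by the unweighted $LE_M$ norms already available there; and the perturbation $P_{a,b}$ itself, of size $|a|+|b|$, which is absorbed on the left for $|a|,|b|$ sufficiently small. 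The inhomogeneous pairing then produces exactly $\|f\|_{LE^*_{mp}}^2$ via the duality between $L^2_{c_i}$ and $c_i L^2$ built into \eqref{leK*}. Finally, the boundary terms on the $\tilde v=\mathrm{const}$ slices and on $\Sigma_R^+$ are handled as in Theorem~\ref{thmSchwarzschild}: since $M$ is bounded and to leading order agrees with the Schwarzschild multiplier plus an $O(|a|+|b|)$ perturbation, the analogs of \eqref{timeBdy} and \eqref{lateralBdy} go through after standard absorbing arguments, yielding the estimate in the stated form.
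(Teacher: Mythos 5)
Your overall strategy is the right one and matches the paper at a high level: perturb off the Schwarzschild estimate (Theorem~\ref{thmSchwarzschild}), use the differential multiplier $X, q, m$ from Lemma~\ref{mmttLemma} to control everything away from the photon sphere, note that the perturbation \eqref{aproxmet} ruins positivity only near $r=r_{ps}$, and repair this with a pseudodifferential correction whose symbol is adapted to the frequency-dependent trapped set via the factorization $p = g^{tt}(\tau-\tau_1)(\tau-\tau_2)$. You also correctly isolate the boundary terms and observe they can be absorbed when $|a|,|b|$ are small. However, the proof has a genuine gap at its core.

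The step where you ``invoke a sharp G{\aa}rding inequality to recover the $LE^1_{mp}$ norm'' does not work, and this is precisely the obstruction the paper's argument is designed to circumvent. The quadratic form one needs to control involves $u$ together with $D_t u$, so after integrating by parts in time the relevant object is the matrix-valued operator
\[
\tilde Q = \left( \begin{array}{cc} q_2^w & q_1^w \\ q_1^w & q_0^w \end{array} \right),
\]
with $q_j\in S^j$; the symbol whose positivity you need is effectively of order two. For a scalar second-order symbol one could hope to use the Fefferman--Phong inequality (sharp G{\aa}rding alone only gains half a derivative, which is insufficient for an $S^2$ symbol), but Fefferman--Phong is \emph{false} for systems and for matrix-valued symbols, which is exactly the situation here. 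The paper states this explicitly and instead proves Lemma~\ref{squaresum}, which expresses $\rho^2\bigl(\tfrac{1}{2i}\{p,X+s\}+p(q+e)\bigr)$ as an explicit finite sum $\sum_j \mu_j^2$ of squares of first-order symbols $\mu_j\in S^1_{hom}+\tau S^0_{hom}$, with the additional structural requirements that most $\mu_j$ be differential at $a=b=0$ and the remaining two be $O(\sqrt{\e_0})$. Passing to operator positivity is then done by quantizing each square separately, sidestepping any need for a lower bound on an indefinite or merely nonnegative second-order matrix symbol.

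A related issue: the paper deliberately chooses the correction $S = is_1^w + s_0^w\partial_t$, $E = e_0^w + \tfrac{1}{i}e_{-1}^w\partial_t$ to be polynomial of degree one in $\partial_t$ with the spatial symbols $s_0,s_1,e_0,e_{-1}$ independent of $\tau$. This is not an aesthetic preference but is needed to integrate by parts in $t$ and obtain uniform energy bounds on $\tv$-slices; a quantization depending nontrivially on $\tau$, as your description of $M$ ``built from $F(r)\xi$ twisted by a smooth function of $c_1c_2$'' suggests, would make the $\tv=\mathrm{const}$ boundary terms unmanageable. Extracting a symbol of the correct polynomial-in-$\tau$ form from the naive candidate $if(r)(r-r_{a,b}(\tau,\Phi,\Psi))\xi$ is itself a nontrivial step, handled in the paper by the Malgrange preparation theorem. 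Without the sum-of-squares decomposition and the polynomial-in-$\tau$ reduction, the passage from symbol positivity to the operator inequality \eqref{mek} — which is the heart of the theorem — is not justified.
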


\newsection{Proof of Theorem \ref{Kerr}}
 We now prove the localized energy estimate from the previous section assuming \newline$\max(|a|, |b|) \leq \e_0 \ll 1$. The arguments are fundamentally the same as the ones in \cite{TT}, and some of the results will be cited without proof.
 
  The main idea is to use the multiplier method. Unfortunately, due to the complicated nature of the trapping, no differential operator provides us with a positive local energy norm. Instead we rely on the smoothed out (near the photonsphere) vector field $X$ to control the $LE_{mp}$ norm away from the trapped set, to which we add a pseudodifferential correction near the trapped set. Since we would also like to establish uniform energy bounds, we will pick the correction to be a differential operator of first order in $t$, which will allow us to integrate by parts with respect to time. 
  



If one integrates \eqref{divP} on the domain
\[
D = \{ 0 < \tv < \tv_0,\ r > r_e \},
\]
local energy estimates are established as long as the boundary terms satisfy
\begin{equation}
  BDR[u]  \leq c_1 E[u](\Sigma_R^-) - 
c_2 (E[u](\tilde t_0) + E[u](\Sigma_R^+)), \qquad c_1, c_2 > 0
\label{bdrpos}\end{equation}
and $Q[u,X,q,m] \geq 0$. 

From here one, we will use the sub(super)scripts $S$ and $mp$ to
denote the Schwarzschild and the Myers-Perry metrics respectively.  
By choosing $X$, $q$ and $m$ like in Section 3, we know that 
\begin{equation}
Q^S[u,X,q,m] \gtrsim r^{-4} (\partial_r u)^2 + \left(1-\frac{r_{ps}}r
\right)^2 (r^{-4} (\partial_{\tv} u)^2 + r^{-1}|\ang u|^2) + r^{-3}
u^2.
\label{posS}\end{equation}
and the boundary terms satisfy \eqref{bdrpos}

The same computation can be performed for the Kerr metric. Precisely, with
$\partial$ standing for $\partial_t$ and $\partial_x$, $x = r \omega$,
\begin{equation}\label{aproxmet}
 |\partial^\alpha [(g_{mp})_{ij}-(g_{S})_{ij}]|\leq c_\alpha
 \frac{\e_0}{r^{2+|\alpha|}}, \qquad 
 |\partial^\alpha [(g_{mp})^{ij}-(g_{S})^{ij}]|\leq c_\alpha
 \frac{\e_0}{r^{2+|\alpha|}}
\end{equation}
and thus one can easily check that
\begin{equation}
|P_\alpha^S[u,X,q,m] -P_\alpha^{mp}[u,X,q,m]| \lesssim \frac{\e_0}{r^2}
|\partial u|^2, 
\label{pdiff}\end{equation}
respectively 
\begin{equation}
|Q^S[u,X,q,m] -Q^{mp}[u,X,q,m]| \lesssim \e_0 \left( \frac{1}{r^2}
|\partial u|^2 + \frac{1}{r^4} |u|^2\right).
\label{qdiff}\end{equation}
 Thus \eqref{bdrpos} still holds; however, we can only say that
\begin{equation}
Q^{mp}[u,X,q,m] \gtrsim r^{-4} |\partial_r u|^2 + \left[\left(1-\frac{r_{ps}}r
\right)^2-C\e_0\right] (r^{-4} |\partial_{\tv} u|^2 + r^{-1}|\ang u|^2) + r^{-3}
u^2,
\label{qkbd}\end{equation} 
and the right hand side is no longer positive definite near $r = r_{ps}$.

 In order to correct this, we will add a pseudodifferential correction to $X$ and $q$. In order to quantize the symbols, we will use a Weyl calculus with respect to the metric $dV_{mp} = r\rho^2 drd\omega$. We thus slightly abuse the notation and redefine the Weyl quantization as
 \[
  s^w:= \frac{r}{\rho} \ s^w\  \frac{\rho}{r}
 \]
 so that real symbols get quantized to self-adjoint operators with respect to $L^2(dV_{mp})$.
 
  Let 
\[
S =  i s_1^w + s_0^w \partial_t, \qquad E =  e_0^w 
+ \frac{1}i  e_{-1}^w \partial_t
\]
where $s_1 \in S^1$, $s_0,e_0 \in S^0$ and $e_{-1} \in S^{-1}$ are
real symbols with kernels supported close to $r=r_{ps}$. One can now compute
\begin{equation}
 \Re \int_D \Box_{mp} u \cdot
(S+E) u \, dV_{mp} = \int_D Qu \cdot u \, dV_{mp}+ BDR^{mp}[u,S,E]
\label{intdivs}\end{equation}
where
\begin{equation}\label{Qform}
Q = \frac12 ([\Box_{mp}, S] + \Box_{mp} E + E\Box_{mp}) = q_2^w  + 2 q_1^w D_t + q_0^wD_t^2 + q_{-1}^w D_t^3
\end{equation}
with $q_j \in S^j$. 

 Since the Weyl quantization is only with respect to the spatial variables, we define the matrix-valued pseudodifferential operator
\[
 \tilde Q = \left( \begin{array}{cc}
q_2^w & q_1^w \\
q_1^w & q_0^w \end{array} \right).
\] 
Let
\begin{equation}
IQ^{mp}[u,S,E]  = \int_D q_{2}^w u \cdot \overline u + 2\Re
q_1^w u \cdot \overline {D_t u} + q_0^w {D_t u} \overline {D_t u} \
dV_{mp}
\label{IQrep}\end{equation}
and note that
\[
 IQ^{mp}[u,S,E]  = \int_0^{\tv_0} \la \tilde Q \vec{u}, \vec{u} \ra d\tv
\]
where $\vec{u} = (u, D_t u)$.

 We will pick $S$ and $E$ so that $q_{-1}^w = 0$; in this case, after integrating by parts in time, \eqref{intdivs} becomes
\begin{equation}
 \Re \int_D \Box_{mp} u \cdot
(S+E) u dV_{mp} = IQ^{mp}[u,S,E] + BDR^{mp}[u,S,E]. 
\end{equation} 
 The exact form of the boundary terms does not matter, since we will pick the symbols $s_i\in \e_0S^i$ to be small when $\e_0 \ll 1$. In particular, since there are no boundary terms at $r=r_e$, this means that 
\[
|BDR^{mp}[u,S,E]| \lesssim \e_0 (E[u](0) + E[u](\tv_0)).
\] 
 The local energy estimates now follow if we can pick $S$ and $E$ so that 
\begin{equation}
  \int_D  Q^{mp}[u,X,q,m]  dV_{mp}  + IQ^{mp}[u,S,E] 
\gtrsim \|u\|_{LE_{mp}}^2.
\label{mek}\end{equation} 

 Near $r=r_{ps}$, the principal symbol of the quadratic form on the left in \eqref{mek}
is (see \eqref{Qform})
\[
\frac{1}{2i} \{ p, X+s\} + p(q+ e)
\]
where for convenience we denote
\[
s = s_1 + \tau s_0, \qquad e = e_0 + \tau e_{-1}.
\]
 
 In order for \eqref{mek} to hold, the above symbol must dominate the principal part of the $LE_{mp}$ norm, i.e.
\[
 \frac{1}{2i} \{ p, X+s\} + p(q+ e) \gtrsim c_2^2(\tau -\tau_1)^2 + 
c_1^2(\tau-\tau_2)^2 + \xi^2.
\] 
 Unfortunately this is not enough. Indeed, since $\tilde Q$ is a
 matrix valued operator of second order, the Fefferman-Phong
 inequality does not hold. What we do instead is write the left hand
 side as a sum of squares dominating the right hand side. We have: 

\begin{lemma} \label{squaresum}
  Let $\e_0$ be sufficiently small. Then there exist smooth homogeneous
  symbols $s \in \e_0 (S^1_{hom} + \tau S^0_{hom})$, $e \in \e_0 (S^0_{hom} + \tau S^{-1}_{hom})$ so that for $r $ close to $r_{ps}$ we have 
  \begin{equation}
\rho^2\left( \frac1{2i} \{ p, X+s \} + p(q+e)\right) = \sum_{j = 1}^{11} \mu_j^2 \gtrsim c_2^2(\tau -\tau_1)^2 + 
c_1^2(\tau-\tau_2)^2 + \xi^2
\label{sumsqK}\end{equation}
for some $\mu_j \in S^1_{hom}+\tau S^0_{hom}$ that depend smoothly on $a,b$ and in addition satisfy the conditions:

i) $\mu_i$ are differential operators for $a=b=0$, $i=\overline{1, 9}$

ii) $\mu_{10}$ and $\mu_{11}$ are small, in the sense that
\[
(\mu_{10},\mu_{11}) \in \sqrt{\e_0} (S^1_{hom}+\tau S^0_{hom}).
\]
\end{lemma}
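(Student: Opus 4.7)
The plan is to perturb off the Schwarzschild case. When $a=b=0$, the expression $\rho^2\bigl(\tfrac{1}{2i}\{p,X\}+pq\bigr)$ is precisely the principal symbol of the positive quadratic form $Q^S[u,X,q,0]$ controlled in Lemma \ref{mmttLemma} (with the smoothed multiplier $F$ and the companion $q_1[F]$). By inspecting that positivity proof one sees that the symbol is in fact a sum of squares of \emph{linear} expressions in $(\tau,\xi,\Theta,\Phi,\Psi)$: one square each from the $\weight(\partial_r u)^2$ and $(\partial_{\tv}u)^2$ contributions, squares from the three angular components encoded in $|\ang u|^2$, one from the $r^{-3}u^2$ term, together with a few more produced by completing the square against the lower order $q$ and $m$ pieces. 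A careful bookkeeping of these contributions yields at most nine such squares $\mu_j^S$, each the symbol of a differential operator.

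For small $a,b$, the roots $\tau_1,\tau_2$, the trapping locus $r_{a,b}$, the weights $c_1,c_2$, and the metric coefficients all depend smoothly on $(a,b)$ with $O(\e_0)$ deviation from their Schwarzschild limits (see \eqref{aproxmet}). The first step is to deform each $\mu_j^S$ into a smooth $\mu_j\in S^1_{hom}+\tau S^0_{hom}$ (no longer differential once $(a,b)\ne 0$) and write
\[
\rho^2\Bigl(\tfrac{1}{2i}\{p,X\}+pq\Bigr)=\sum_{j=1}^{9}\mu_j^2+\e_0 R,
\]
where $R\in S^2_{hom}+\tau S^1_{hom}+\tau^2 S^0_{hom}$ is a smooth, bounded quadratic form. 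The difficulty is that $R$ need not be nonnegative near the Kerr trapped set, because the quadratic vanishing that gave Schwarzschild positivity is now centered at the shifted locus $r=r_{a,b}(\tau,\Phi,\Psi)$ instead of $r=r_{ps}$.

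Following the strategy of \cite{TT}, I would then design the pseudodifferential correction $s\in \e_0(S^1_{hom}+\tau S^0_{hom})$ and $e\in \e_0(S^0_{hom}+\tau S^{-1}_{hom})$ so that $\tfrac{1}{2i}\{p,s\}+pe$ cancels $\e_0R$ modulo a genuinely small sum of two squares. The factorization $p=g^{tt}(\tau-\tau_1)(\tau-\tau_2)$, together with the fact that the trapped set corresponds to a double root of $R_{a,b}$, lets one express the offending part of $R$ as a combination of the factors $(\tau-\tau_i)$ and $(r-r_{a,b})$. A schematic ansatz
\[
s=\e_0\bigl[\alpha_1(\tau-\tau_2)c_1+\alpha_2(\tau-\tau_1)c_2\bigr],
\]
with bounded symbols $\alpha_i$ to be tuned, produces through $\{p,s\}$ exactly the cancellation required, while $e$ is selected to kill the residual $D_t^3$ contribution (enforcing $q_{-1}=0$ in \eqref{Qform}) and to restore the dominance over $c_1^2(\tau-\tau_2)^2+c_2^2(\tau-\tau_1)^2+\xi^2$. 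The uncancellable residue is itself of order $\e_0$ and of sum-of-squares type, giving exactly two extra squares $\mu_{10}^2+\mu_{11}^2$ with $\mu_{10},\mu_{11}\in \sqrt{\e_0}(S^1_{hom}+\tau S^0_{hom})$, precisely as required by (ii).

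The main obstacle is that this must be an explicit algebraic identity, not a mere positivity estimate: as the paper emphasizes, $\tilde Q$ is matrix-valued and second order, so Fefferman--Phong is unavailable and abstract positivity cannot be converted into the matrix-valued $L^2$ bounds needed for \eqref{mek}. Preserving smooth dependence on $(a,b)$ throughout the deformation, and identifying which parts of the residue must be pushed into $\mu_{10},\mu_{11}$ (with their $\sqrt{\e_0}$ loss) as opposed to being absorbed smoothly into the nine main squares, is where the bookkeeping is most delicate; everything else is a smooth perturbation of the Schwarzschild identity.
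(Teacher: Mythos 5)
Your high-level plan is right (perturb from Schwarzschild, choose $s$ to restore the sum-of-squares identity, use $e$ to kill the $D_t^3$ coefficient, and expect exactly two small residual squares), but the specific ansatz for $s$ is not the one that works, and that is where the proof actually lives.

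The paper's key step is geometric: the Schwarzschild multiplier has symbol $X = i f(r)(r-r_{ps})\xi$, vanishing at the photon sphere, and the Myers-Perry correction is built from
\[
\tilde s = i f(r)\bigl(r - r_{a,b}(\tau,\Phi,\Psi)\bigr)\xi,
\]
i.e.\ one replaces the fixed radius $r_{ps}$ by the frequency-dependent trapping radius $r_{a,b}$. This choice is forced: $r_{a,b}$ is defined as the simple zero of $R_{a,b}$, and \eqref{rderiv} shows $\partial_r(\rho^2 p) = -2r R_{a,b}\Delta^{-2} + \partial_r(\Delta/r^2)\xi^2$, so that on $\{p=0\}$ the Poisson bracket $\frac1{2i}\{\rho^2 p,\tilde s\}$ factors as a nonnegative quantity
\[
\alpha^2\,\tau^2\bigl(r-r_{a,b}(\tau,\Phi,\Psi)\bigr)^2 + \beta^2\xi^2,
\]
giving quadratic vanishing exactly on the trapped set. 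Your ansatz $s=\e_0[\alpha_1(\tau-\tau_2)c_1+\alpha_2(\tau-\tau_1)c_2]$ has no $\xi$ factor and is built from the cutoff symbols $c_i$, which are there to define the function spaces, not to generate cancellation under the Poisson bracket; there is no reason $\{p,s\}$ with this ansatz would produce the needed $\tau^2(r-r_{a,b})^2+\xi^2$ structure. A second missing step is that $\tilde s$ is not polynomial in $\tau$ (because $r_{a,b}$ is only homogeneous, not polynomial, in $(\tau,\Phi,\Psi)$), so the paper invokes the Malgrange preparation theorem to write $\tfrac1i\tilde s = (r-r_{ps})f\xi + s_1 + s_0\tau + \gamma p$ and then sets $s = i(s_1+s_0\tau)$, discarding the $\gamma p$ piece which is irrelevant on the characteristic set; your proposal does not address this at all, and it is essential for getting $s\in\e_0(S^1_{hom}+\tau S^0_{hom})$.

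There is also a smaller but real gap at the Schwarzschild base case: you assert that "inspecting the positivity proof" of Lemma \ref{mmttLemma} yields a sum of at most nine linear squares, but that lemma proves a pointwise lower bound, not a sum-of-squares identity. What is actually needed is the explicit identity $r^2 q^S = \alpha_S^2\tau^2 + \beta_S^2\xi^2 + \tilde q\,(r^2 p_S)$ together with the observation that $-g^{tt}r^2\tilde q = \nu\alpha_S^2$ with $0<\nu<1$ (a consequence of $q^{S,0}>0$), after which one substitutes the factorization of $p_S$ and splits the spherical part $\lambda^2$ into the six squares $(x_k\eta_j-x_j\eta_k)^2$. Only then does one obtain the nine differential squares that you are perturbing from; the nine/two split of $\mu_j$ in the Myers-Perry case then emerges from evaluating $\alpha,\beta$ at the two roots $\tau_1,\tau_2$ and isolating the $O(\e_0)$ mismatch $\beta_1^2-\beta_2^2$ into $\mu_{10},\mu_{11}$. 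Your observation that matrix-valued Fefferman-Phong is unavailable and that one must have an explicit identity is correct and well taken; it is precisely why these algebraic details cannot be waved away.
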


\begin{proof}
 We will start by reformulating the results of
 Section \ref{Schwarzschild} in the setting of pseudodifferential operators. We know that near $r=r_{ps}$ the symbol of $X$ is $if(r) (r-r_{ps})\xi$ for some smooth function $f>0$. Then near $r=r_{ps}$, we can write
\[
 Q^S [u,X,q,m] = q^{S, \alpha\beta} \partial_{\alpha}u\partial_{\beta} u + q^{S, 0}u^2
\] 
 where the principal symbols are
\[
 q^S = q^{S, \alpha\beta}\eta_{\alpha}\eta_{\beta} =  \frac{1}{2i} \{p_S,X\} + q p_S, \qquad q^{S,0} = -  \frac12
\nabla^\alpha \partial_\alpha q.
\] 
 The symbol of $\Box_S$ is
\[
p_S = - \left(1-\frac{r_s^2}{r^2} \right)^{-1} \tau^2 + \left(1-\frac{r_s^2}{r^2}
\right) \xi^2 + \frac{1}{r^2} \lambda^2
\]
where $\lambda$ stands for the spherical Fourier variable.
 
 We can now compute
\begin{equation}
\begin{split}
r^2 q^S =&\   \frac{1}{2i} \{r^2 p_S,X\} +( q - r^{-1} f(r)(r-r_{ps})) (r^2 p_S) 
\\ =&\  \alpha_S^2(r)  \tau^2 + \beta_S^2(r)  \xi^2 + \tilde q(r)(r^2 p_S)   
\end{split}
\label{sqss}\end{equation}
where, near  $r = r_{ps}$,
\[
\alpha_S^2(r) =  \frac{ r^3 (r+\sqrt{2} r_s)  f(r) (r-r_{ps})^2}{(r^2-r^2_{s})^2},
\]
\[
\quad \beta_S^2(r) = (r^2 - r_s^2)f  + (r-r_{ps})(f'(r^2 -r_s^2) -rf),
\]
respectively 
\[
\tilde q (r) =  q - r^{-1} f(r)(r-r_{ps}).
\]
The facts that $f > 0$ and $\Bigl[\Bigl(1-\frac{r_s^2}{r^2}\Bigr)^{-1} (r-r_{ps})f(r)\Bigr]'>0$
are used
to write the first two
coefficients as squares.

 Due to the results of Section \ref{Schwarzschild}, we know that 
\begin{equation}
q^S \gtrsim \xi^2 + (r-r_{ps})^2 (\tau^2 + \lambda^2), \qquad q^{S,0}
> 0
\end{equation}
near $r=r_{ps}$, 
which implies that $\tilde q$ is a multiple of $(r-r_{ps})^2$ and moreover we can write
\[
-g^{tt} r^2 \tilde q = \nu(r) \alpha_S^2(r), \qquad 0 < \nu < 1.
\] 
The symbol $\lambda^2$ of the spherical Laplacian can also be written
as sums of squares of differential symbols,
\[
\lambda^2 = \sum_{i=1}^6 \lambda_{i}^2 
\]
where in Euclidean coordinates we can write
\[
\{ \lambda_{i} \} = \{ x_k \eta_j -x_j \eta_k, 1\leq k<j\leq 4 \}.
\]
 This leads to the sum of squares
\begin{equation}
  r^2 q^S =  (1-\nu(r)) \alpha_S^2(r)  \tau^2 + \beta_S^2(r)  \xi^2 +
  \nu(r)\alpha_S^2(r) r^{-2}(\sum_{i=1}^6 \lambda_{i}^2
  + (r^2-r_s^2) \xi^2).
\label{sumsqS}\end{equation}

 The natural counterpart for the Myers-Perry spacetime is the symbol
\[
 \tilde s = i f(r) (r-r_{a,b} (\tau, \Phi, \Psi))\xi.
\] 
 This is a homogeneous symbol that coincides with $X$ in the Schwarzschild case $a=b=0$, and it is well
defined for $r$ near $r_{ps}$ and $|\Phi|, |\Psi| < C_{mp}|\tau|$. In
particular it is well defined in a neighborhood of the characteristic
set $p =0$, which is all we need.


 We can now compute the Poisson bracket on the characteristic set $\{p=0\}$: 
\[
\begin{split}
\frac{1}{i}  \{ \rho^2 p, \tilde s\} =&\ -  (\rho^2 p)_r f(r)(r-r_{a,b}(\tau,\Phi, \Psi))+  
\xi (\rho^2 p)_\xi \partial_{r}\left(f(r)(r-r_{a,b}(\tau,\Phi, \Psi))\right) 
\\
 = &\ 2r f(r) R_{a,b}\Delta^{-2}(r-r_{a,b}(\tau,\Phi, \Psi))
\\ &\ +
 \left[2\frac{\Delta}{r^2}\partial_{r}\bigl(f(r)(r-r_{a,b}(\tau,\Phi,
   \Psi))\bigr) - (\partial_r \frac{\Delta}{r^2}) f(r)
   (r-r_{a,b}(\tau,\Phi, \Psi))  \right] \xi^2. 
\end{split}
\] 
 Since $r_{a,b}(\tau,\Phi,\Psi)$ is the unique zero of $R_{a,b}$ near
$r=r_{ps}$ and is close to $r_{ps}$, it follows that we can write
\begin{equation}
\frac{1}{2i} \{ \rho^2 p,\tilde s\} = 
\alpha^2(r,\tau,\Phi,\Psi) \tau^2 (r-r_{a,b}(\tau,\Phi,\Psi))^2 +
\beta^2(r,\tau,\Phi,\Psi)\xi^2  \qquad \text{on} \ \ \{p = 0\}
\label{pbs}\end{equation}
where $\alpha, \beta \in S^0_{hom} $ are positive
symbols.

Unfortunately $\tilde s$ is not a polynomial in $\tau$. To remedy that we first note that 
\[
\tilde s - i f(r) (r-r_{ps}) \xi \in \e_0 S^1_{hom}.
\]
Hence by (the simplest form of)
the Malgrange preparation theorem we can write
\[
\frac{1}i \tilde s = (r-r_{ps})f(r)\xi  +  s_1(r,\xi, \theta,\Theta,\Phi,\Psi)+
s_0(r,\xi, \theta,\Theta,\Phi,\Psi) \tau)+  
\gamma(\tau, r,\xi, \theta,\Theta,\Phi,\Psi) p
\]
with $s_1 \in \e_0 S^1_{hom}$, $s_0 \in \e_0 S^0_{hom}$ and $\gamma \in
\e_0 S^{-1}_{hom}$.  Then we define the desired symbol $s$ by
\[
s = i(s_1 + s_0 \tau).
\]

 We also define 
\[
\alpha_{i} =\frac{2|\tau_{i}|}{\tau_{1} -\tau_{2}}
 \alpha(r,\tau_{i},\Phi,\Psi) (r - r_{a,b}(\tau_{i},\Phi,\Psi)) \in S^0_{hom},
\qquad \beta_{i} = \beta(r,\tau_{i},\Phi,\Psi),
\] 
 and let $C$ be a large constant. Then we can find $e \in \e_0 (S^0_{hom} + \tau S^{-1}_{hom})$ so that \eqref{sumsqK} holds for
\[
\mu_{i}^2 =  \frac{\lambda_{i}^2}{r^2( \lambda^2
  + (r^2-r_s^2) \xi^2} \frac{\nu}4 (\alpha_1 (\tau-\tau_2)
  -\alpha_2(\tau-\tau_1))^2, \qquad i=\overline{1,6},
\] 
\[
 \mu_7^2 = \frac{(r^2-r_s^2) \xi^2}{r^2( \lambda^2
  + (r^2-r_s^2) \xi^2} \frac{\nu}4 (\alpha_1 (\tau-\tau_2)
  -\alpha_2(\tau-\tau_1))^2
\]
\[
\mu_8^2 = \frac{1-\nu}4 (\alpha_1 (\tau-\tau_2)
  +\alpha_2(\tau-\tau_1))^2, \qquad \mu_9^2 = \frac12 (\beta_1^2 + \beta_2^2 - C\e_0) \xi^2
\]
\[
\mu_{10}^2 =  \frac{(C\e_0 - \beta_2^2+\beta_1^2) (\tau-\tau_2)^2} {2(\tau_1-\tau_2)^2}
  \xi^2, \qquad \mu_{11}^2 = \frac{ (C\e_0-\beta_1^2+\beta_2^2)
    (\tau-\tau_1)^2}{2(\tau_1-\tau_2)^2} \xi^2 
\]

 The proof is identical to the one in Lemma 4.3 of \cite{TT}, so we shall skip it.
 
\end{proof}

 To finish the proof, we will pick $S$ and $E$ so that \eqref{mek} holds. Since $s$ and $e$ are homogeneous, we need to truncate away the low frequencies, so we redefine
\[
 s:= \chi_{>1} s, \qquad e:= \chi_{>1} e
\] 
 where $\chi$ is a smooth symbol equal to $1$ for frequencies greater than $1$ and $0$ for frequencies $\ll 1$. We also need to truncate near the trapped set. Let $\chi$ be a smooth cutoff function supported near $r_{ps}$
which equals $1$ in a neighborhood of $r_{ps}$, chosen so that we have a
smooth partition of unity in $r$,
\[
1 = \chi^2(r) + \chi_{o}^2(r).
\]
We now define the operators 
\[
S = \chi s^w \chi, \qquad E = \chi e^w \chi - e^w_{aux} D_t
\]
 where the operator $e^w_{aux}$ is chosen so that the $D_t^3$ term in \eqref{Qform} vanishes:
\[
g^{tt} e^w_{aux} +  e^w_{aux}g^{tt} =  q_{-1}^w 
\]

 The proof that \eqref{mek} holds for small enough $\e_0$ is identical to the one in \cite{TT}. 

\bigskip

\end{document}